\definecolor{rr}{rgb}{.8,0,.3}
\newfont{\bb}{msbm10 at 11pt}
\newfont{\bbsmall}{msbm8 at 8pt}
\def\R{\mathbb{R}}
\def\C{\mathbb{C}}
\def\D{\mathbb{D}}
\def\esf{\mathbb{S}}
\newcommand{\ben}{\begin{enumerate}}
\newcommand{\bit}{\begin{itemize}}
\newcommand{\een}{\end{enumerate}}
\newcommand{\eit}{\end{itemize}}
\newcommand{\wt}{\widetilde}
\newcommand{\sol}{\mathrm{Sol}_3}
\newcommand{\ed}{\end{document}}
\def\cA{\mathcal{A}}
\def\cU{\mathcal{U}}
\def\cB{\mathcal{B}}
\def\cH{\mathcal{H}}
\def\cL{\mathcal{L}}
\def\cM{\mathcal{M}}
\let\landa=\lambda
\let\alfa=\alpha
\let\parc=\partial
\def\landa{\lambda}
\def\flecha{\rightarrow}
\def\esiz{\langle}
\def\esde{\rangle}
\def\cte.{\mathop{\rm cte.}\nolimits}
 \def\dim{\mathop{\rm dim }\nolimits}
\def\E{\mathbb{E}}
\def\R{\mathbb{R}}
\def\C{\mathbb{C}}
\def\D{\mathbb{D}}
\def\H{\mathbb{H}}
\def\S{\mathbb{S}}
\def\Ek{\mathbb{E}^3 (\kappa,\tau)}
\newtheorem{theorem}{Theorem}[section]
\newtheorem{lemma}[theorem]{Lemma}
\newtheorem{remark}[theorem]{Remark}
\newtheorem{corollary}[theorem]{Corollary}
\newtheorem{definition}[theorem]{Definition}
\newtheorem{conjecture}[theorem]{Conjecture}
\newtheorem{example}[theorem]{Example}
\newcommand{\su}{{\rm SU}(2)}
\newcommand{\EE}{\wt{\mathrm E}(2)}
\renewcommand{\sl}{\wt{\mathrm{SL}}(2,\R)}
\numberwithin{equation}{section}
\begin{document}

\begin{title}
[A Hopf theorem and a conjecture of Alexandrov]{A Hopf theorem for non-constant mean curvature \\ and a conjecture of A.D. Alexandrov}
\end{title}
\author{José A. Gálvez}
\address{José A. Gálvez, Departamento de Geometría y Topología,
Universidad de Granada, 18071 Granada, Spain}
 \email{jagalvez@ugr.es}

\author{Pablo Mira}
\address{Pablo Mira, Departamento de Matemática Aplicada y Estadística, Universidad Politécnica de
Cartagena, 30203 Cartagena, Murcia, Spain.}

\email{pablo.mira@upct.es}

\thanks{The authors were partially supported by
MICINN-FEDER, Grant No. MTM2013-43970-P, Junta de Andalucía Grant No.
FQM325, and
Junta de Andalucía, reference P06-FQM-01642.}

\subjclass{Primary 53A10; Secondary 49Q05, 53C42}


\keywords{Prescribed mean curvature, Alexandrov conjecture, Christoffel type problem,
immersed spheres, homogeneous three-manifold, Gauss map.}

\begin{abstract}
We prove a uniqueness theorem for immersed spheres of prescribed (non-constant) mean
curvature in homogeneous three-manifolds. In particular, this uniqueness theorem proves a conjecture by A.D. Alexandrov about immersed spheres of prescribed Weingarten curvature in $\R^3$ for the special but important case of prescribed mean curvature. As a consequence, we extend the classical Hopf uniqueness theorem for constant mean curvature spheres to the case of immersed spheres of prescribed antipodally symmetric mean curvature in $\R^3$.
\end{abstract}
\maketitle

\section{Introduction}
In his famous 1956 paper, A.D. Alexandrov \cite{A1} conjectured the following result:

\begin{conjecture}
Let $S\subset \R^3$ be a strictly convex sphere, and let $\Phi(k_1,k_2,x)\in C^1(\Omega)$ be a function such that $$\frac{\parc \Phi}{\parc k_1} \frac{\parc \Phi}{\parc k_2} >0$$ on the domain $\Omega\subset \R^2\times \S^2$ given by $$\Omega =\{(\landa \kappa_1(p),\landa \kappa_2(p),\nu(p))\in \R^2\times \S^2 : p\in S, \landa \in \R\},$$ where $\nu:S\flecha \S^2$ is the Gauss map of $S$, and $\kappa_1,\kappa_2:S\flecha \R$ are its principal curvatures.

Let $f:\S^2\flecha \R$ be the function defined by
\begin{equation}\label{eqfil}
\Phi(\kappa_1(p),\kappa_2(p),\nu(p)) = f(\nu(p)) \hspace{1cm} \forall p\in S.
 \end{equation}

Then any other compact surface $\Sigma$ of genus zero immersed in $\R^3$ whose Gauss map $\nu$ and principal curvatures $\kappa_1,\kappa_2$ satisfy \eqref{eqfil} is a translation of $S$.
\end{conjecture}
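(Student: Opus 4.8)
The plan is to parametrize everything by the Gauss map and to measure the deviation of a competitor $\Sigma$ from $S$ by a holomorphic-type quadratic differential. Since $S$ is strictly convex, its Gauss map $\nu_S\colon S\flecha\S^2$ is a diffeomorphism, so $S$ is encoded by its support function $h_S\colon\S^2\flecha\R$, and the principal radii of curvature $1/\kappa_i$ are the eigenvalues of the symmetric operator $\mathrm{Hess}_{\S^2}h_S+h_S\,\mathrm{Id}$. Under this identification the prescription $\Phi(\kappa_1,\kappa_2,\nu)=f(\nu)$ becomes a fully nonlinear second-order equation $\mathcal F[h_S]=f$ on $\S^2$ whose ellipticity is precisely the hypothesis $\Phi_{k_1}\Phi_{k_2}>0$ (along $S$ both partials are positive, since the $\kappa_i>0$). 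A translation of $S$ by $v$ replaces $h_S$ by $h_S+\langle v,\cdot\rangle$, and because a linear function $\ell_v$ satisfies $\mathrm{Hess}_{\S^2}\ell_v+\ell_v\,\mathrm{Id}=0$ it leaves all curvatures unchanged; hence $\ell_v$ lies in the kernel of the linearization of $\mathcal F$ at $h_S$. The theorem then rests on two pillars: that $\Sigma$ is itself convex, so that it too has a support function solving $\mathcal F[h]=f$, and that the only such solutions near $h_S$ are the $h_S+\ell_v$.

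The heart of the argument is to build, on any genus-zero solution $\Sigma$, a quadratic differential that detects its failure to agree with $S$. Fixing a conformal parameter $z$ adapted to the ellipticity structure of $\Phi$ --- the conformal class in which the traceless part of the second fundamental form, weighted by the derivatives $\Phi_{k_i}$, is of type $(2,0)$ --- I would form a differential $Q\,dz^2$ and twist it by the data of $S$ pulled back through the Gauss map (the position map $\nu_S^{-1}\colon\S^2\flecha\R^3$ of the prescribed sphere), so as to cancel the non-autonomous dependence on $\nu$. The Codazzi equations together with the Weingarten relation then force $\partial_{\bar z}Q=a\,Q+b\,\bar Q$ with bounded $a,b$, the dependence on $\nu$ and on $f$ entering only through these lower-order coefficients. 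By the Bers similarity principle $Q$ is either identically zero or has isolated zeros of negative index; since $\Sigma$ has genus zero, the Poincar\'e--Hopf count of the associated line field forbids a nonzero $Q$, so $Q\equiv0$. Vanishing of $Q$ says that the principal directions and curvature ratios of $\Sigma$ match those prescribed by $S$ at each normal direction, which together with $\deg\nu_\Sigma=+1$ (forced by $\int_\Sigma K\,dA=4\pi$) makes $\nu_\Sigma$ a local, hence global, diffeomorphism: $\Sigma$ is strictly convex.

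For the second pillar, both $h_S$ and the now-defined $h_\Sigma$ solve $\mathcal F[h]=f$. Setting $w=h_\Sigma-h_S$ and integrating $D\mathcal F$ along the segment $h_S+tw$, $0\le t\le1$, yields a linear homogeneous equation $Lw=0$ on $\S^2$ with $L$ uniformly elliptic (again by $\Phi_{k_1}\Phi_{k_2}>0$) and of the form $Lw=A\!:\!(\mathrm{Hess}_{\S^2}w+w\,\mathrm{Id})$ for a positive-definite tensor field $A$ built from the curvatures of $S$. The translations $\ell_v$ lie in $\ker L$, and the goal is to show $\ker L$ is exactly this three-dimensional space, so that $w=\ell_v$ and $h_\Sigma=h_S+\ell_v$, that is $\Sigma=S+v$. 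For the round sphere $A$ is a constant multiple of the identity and $L$ reduces to $\Delta_{\S^2}+2$, whose kernel is exactly the first spherical harmonics; the task is to propagate this nondegeneracy to the variable-coefficient $L$.

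The step I expect to be the main obstacle is precisely this kernel computation --- equivalently, the nondegeneracy of $S$ among Weingarten spheres with the same $f$ --- hand in hand with the construction of a genuinely holomorphic (rather than merely quasi-holomorphic) twisted differential. In the prescribed-mean-curvature case the operator $\mathcal F$ is, after passing to the dual Christoffel problem, essentially linear, so $L$ is explicit, its kernel is controlled directly, and the twisting can be made exact; for a general elliptic $\Phi$ the equation is genuinely fully nonlinear and the $\nu$-dependence couples $A$ to the geometry of $S$, so ruling out spurious, non-translational Jacobi fields of $L$ is delicate. This is where the full conjecture, as opposed to its mean-curvature specialization, becomes hard.
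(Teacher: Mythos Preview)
First, a framing point: the paper does not prove this conjecture. It establishes only the special case $\Phi(k_1,k_2,x)=k_1+k_2$ (Theorem~\ref{mainr3}), and explicitly leaves the general elliptic $\Phi$ open. So the relevant comparison is between your outline and the paper's argument for prescribed mean curvature.

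Your first pillar---a twisted Hopf-type differential satisfying $Q_{\bar z}=aQ+b\bar Q$, then the Carleman/Bers similarity principle plus Poincar\'e--Hopf to force $Q\equiv 0$ on a genus-zero $\Sigma$---is exactly the paper's mechanism. The decisive difference is that the paper's differential $Q_{\cH}$ is engineered so that $Q_{\cH}\equiv 0$ on $\Sigma$ \emph{already} implies $\Sigma$ is a translation of $S$, with no second pillar. The paper never touches support functions. Instead it derives a complex elliptic PDE for the stereographically projected Gauss map $g$ (Theorem~\ref{rep}) and sets
\[
Q_{\cH}=\cL(g)\,g_z^2+\cM(g)\,g_z\bar g_z,\qquad \cM(q)=\frac{1}{R(\cH(q),q)},\qquad \cL(q)=-\frac{\bar G_z}{G_z}\bigl(G^{-1}(q)\bigr)\,\cM(q),
\]
where $G$ is the Gauss map of the reference sphere $S$. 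The coefficient $\cL$ is pulled back through $G^{-1}$, not through the position map $\nu_S^{-1}$ as you propose. Vanishing of $Q_{\cH}$ on $\Sigma$ then forces $\phi:=G^{-1}\circ g\colon\Sigma\to S$ to be holomorphic (Claim~2 in the proof of Theorem~\ref{hopf}); after a conformal change of coordinates $g=G$, and the Kenmotsu-type representation of Theorem~\ref{rep} recovers the immersion from $(g,H)$ uniquely up to left translation. That is the whole proof.

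Your second pillar---the linearized support-function operator $L$ and its kernel---is therefore entirely bypassed in the mean-curvature case. (As an aside, prescribed mean curvature is $1/r_1+1/r_2$, not the linear Christoffel sum $r_1+r_2$, so your claim that $\mathcal F$ becomes ``essentially linear'' after dualizing is not right; the paper's route never needs it.) For the full conjecture your outline is a reasonable plan, and the gap you flag---ruling out non-translational Jacobi fields of $L$ for general $\Phi$---is genuine; the paper neither attacks nor resolves it. The takeaway is that for $\Phi=k_1+k_2$ the Weierstrass representation shoulders the entire uniqueness burden, and the key idea is to build the coefficient $\cL$ from $G^{-1}$ so that $Q_{\cH}\equiv 0$ encodes holomorphicity of $G^{-1}\circ g$ rather than merely convexity of $\Sigma$.
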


This conjecture is known to hold when $\Sigma$ is also a strictly convex sphere (\cite{A0,A1,P1,HW}; see \cite{GWZ} for a historical account of the problem and a generalization). In particular, this provides uniqueness for geometric problems formulated in terms of the \emph{curvature radii} $1/\kappa_i$ such as the Christoffel-Minkowski problem in $\R^3$, as the solutions to these problems are automatically strictly convex. Alexandrov stated without proof in \cite{A1} that the conjecture holds provided both $\Sigma,S$ are real analytic.

In this paper we prove the Alexandrov conjecture above in the special but important case of prescribed mean curvature spheres, i.e. when $\Phi=k_1+k_2$.  Observe that in this situation the immersed compact surface $\Sigma$ is not assumed to be strictly convex. In order to state our results, we fix some notation. From now on, all surfaces will be assumed to be (at least) of class $C^3$.

\begin{definition}\label{pmcr3}
\emph{Given $\cH\in C^1(\S^2)$, an immersed oriented surface $\Sigma$ in $\R^3$ with Gauss map
$\nu:\Sigma\flecha \S^2$ is said to have \emph{prescribed mean
curvature} $\cH$ if} $$H_{\Sigma} (p)= \cH(\nu(p))$$
\emph{for every $p\in \Sigma$, where $H_{\Sigma}$ is the mean
curvature of $\Sigma$.}
\end{definition}

\begin{theorem}\label{mainr3}
Let $S\subset \R^3$ be a strictly convex sphere with prescribed mean curvature
$\cH\in C^1(\S^2)$.
Then $S$ is (up to translations) the only immersed compact surface of genus zero in $\R^3$ with prescribed mean curvature $\cH$.
\end{theorem}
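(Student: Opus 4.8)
The plan is to reduce the problem to a Hopf-type argument using a holomorphic quadratic differential, in the spirit of Hopf's proof for constant mean curvature spheres but adapted to the prescribed mean curvature setting. First I would set up the geometry on the common sphere $\S^2$: since $S$ is strictly convex, its Gauss map $\nu_S:S\to\S^2$ is a diffeomorphism, so I can use $\nu_S$ to transport the metric and second fundamental form of $S$ to $\S^2$. The prescribed mean curvature condition $H_S=\cH\circ\nu_S$ becomes an equation on $\S^2$ involving the \emph{support function} (or, equivalently, the third fundamental form and the Hessian of the support function), turning the strictly convex solution $S$ into data that, together with $\cH$, specifies an elliptic PDE on $\S^2$. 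For the competitor $\Sigma$ — which is only assumed to be an immersed genus-zero surface — I would work with a conformal parameter $z$ on $\Sigma\cong\S^2$ and consider the Codazzi-type equations. The key object is a suitable complexification of the traceless part of the second fundamental form of $\Sigma$ \emph{measured against a comparison operator built from $\cH$ and $S$}; concretely, one subtracts from the second fundamental form of $\Sigma$ the second fundamental form of the (unique) strictly convex surface with the same Gauss image point and mean curvature $\cH$, yielding a symmetric $2$-tensor whose $(2,0)$-part $Q\,dz^2$ I want to show is holomorphic.

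The main steps, in order, would be: (1) express both surfaces via their Gauss maps and support-type functions, so that the prescribed-mean-curvature equation for each is the \emph{same} second-order elliptic equation $\mathcal{E}[u]=0$ on $\S^2$, with $S$ corresponding to a solution $u_S$ that is globally admissible (strictly convex) and $\Sigma$ to a possibly non-admissible solution $u_\Sigma$; (2) linearize: the difference $w=u_\Sigma-u_S$ (after arranging, via a translation, that the two Gauss maps are compatibly normalized) satisfies a linear homogeneous second-order elliptic equation $Lw=0$ with no zeroth-order term coming from the mean curvature being a function of the normal alone — this is exactly the structural feature that the Hopf argument exploits; (3) pass to a conformal coordinate and show that the relevant Hopf differential $Q\,dz^2$ associated to $w$ is holomorphic, using the Codazzi equations and the vanishing of the zeroth-order term; (4) since a holomorphic quadratic differential on $\S^2$ vanishes identically, conclude that $w$ satisfies an overdetermined system forcing $w\equiv\mathrm{const}$, hence $\Sigma$ is (up to translation) a reparametrization of $S$. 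Along the way I would use that strict convexity of $S$ guarantees the comparison surface with normal $\nu$ and mean curvature $\cH(\nu)$ exists and varies smoothly, so the subtraction producing $Q$ is well-defined on all of $\Sigma$ even where $\Sigma$ is not convex.

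The hard part will be step (3): constructing the \emph{right} quadratic differential. Unlike the classical CMC case, where the Hopf differential is just the $(2,0)$-part of the second fundamental form and its holomorphicity is immediate from the Codazzi equations, here the mean curvature varies with the normal, so the naive Hopf differential is not holomorphic. The correct object must encode the \emph{difference} between $\Sigma$ and the reference convex solution $S$, and proving its holomorphicity requires showing that the error terms in the Codazzi equations — which involve $\nabla\cH$ along the Gauss map — cancel precisely against the corresponding terms for $S$. This is where the strict convexity of $S$ and the ellipticity of $\mathcal{E}$ enter essentially: strict convexity gives a definite sign that makes the comparison operator invertible, and one needs a careful computation (a Simons-type or Codazzi-type identity for the difference tensor) to see the cancellation. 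A secondary obstacle is handling the points (if any) where $\Sigma$ fails to be an immersion transverse to the reference foliation of comparison surfaces, i.e. ensuring the construction of $Q$ extends continuously across the locus where the Gauss maps of $\Sigma$ and $S$ disagree in orientation; here one uses that the zeros of $Q\,dz^2$ are isolated with negative index (a consequence of holomorphicity on the sphere), which is incompatible with a nontrivial $w$ unless $w$ is constant.
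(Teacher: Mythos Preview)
Your overall strategy---build a Hopf-type quadratic differential on $\Sigma$ using the reference sphere $S$ and invoke Poincar\'e--Hopf---is the paper's strategy. But step~(3) contains a genuine gap: the differential you are after is \emph{not} holomorphic when $\cH$ is non-constant, and aiming for holomorphicity is the wrong target. In the paper the differential is $Q_{\cH}=\cL(g)\,g_z^2+\cM(g)\,g_z\bar g_z$, where $g$ is the stereographic Gauss map of $\Sigma$ and $\cL,\cM:\bar\C\to\C$ are built from the potential $R$ and from the Gauss map $G$ of $S$ (via $\cL(q)=-\frac{\bar G_z}{G_z}(G^{-1}(q))\,\cM(q)$). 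Differentiating and using the Gauss-map PDE (not the Codazzi equations directly) one obtains
\[
(Q_{\cH})_{\bar z}=\alpha\,Q_{\cH}+\beta\,\overline{Q_{\cH}},
\]
with $\alpha,\beta$ locally bounded and $\alpha$ containing $\cH_{\bar q}$ explicitly. The ``error terms involving $\nabla\cH$'' that you hope cancel against those of $S$ do \emph{not} cancel completely; what survives is precisely this $\alpha$. From the Cauchy--Riemann inequality $|(Q_{\cH})_{\bar z}|\le C|Q_{\cH}|$ one still concludes that the zeros of $Q_{\cH}$ are isolated of negative index, and then Poincar\'e--Hopf finishes.

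There is a second, structural problem with your linearization step: since $\Sigma$ is not assumed convex, its Gauss map need not be a diffeomorphism, so there is no global function $u_\Sigma$ on $\S^2$ to subtract from $u_S$, and the ``difference $w=u_\Sigma-u_S$ satisfies a linear elliptic equation on $\S^2$'' formulation does not make sense. The paper sidesteps this by working entirely on $\Sigma$ with a conformal parameter and pulling back the data $\cL,\cM$ (which live on $\bar\C$, defined via $G^{-1}$) along $g:\Sigma\to\bar\C$; here $g$ satisfies $g_z\neq 0$ everywhere (a consequence of $R(\cH(q),q)\neq 0$, which in turn uses that $G$ is a diffeomorphism) but may be many-to-one. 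Your intuition of ``subtracting the second fundamental form of the comparison surface with the same normal'' is morally correct, but it must be implemented as this pullback construction rather than as a difference of scalar functions on $\S^2$.
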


We will actually prove Theorem \ref{mainr3} as a particular case of a more general uniqueness theorem for immersed spheres of prescribed mean curvature in simply connected Riemannian homogeneous three-manifolds not isometric to the product space $\S^2(\kappa)\times \R$ (see Theorem \ref{main}). In particular, Theorem \ref{main} covers the situation of prescribed mean curvature spheres in $\H^3$ or $\S^3$, where no similar result seems to be known.

\begin{remark}
A classical result by Bonnet states that if there exists a diffeomorphism $\Psi:S_1\flecha S_2$ between two compact immersed surfaces $S_1,S_2$ of genus zero in $\R^3$ such that $\Psi$ preserves both the metric and the mean curvature function of the surfaces, then $S_1$ and $S_2$ are congruent in $\R^3$. Note that in Theorem \ref{mainr3} we are not assuming that the spheres $S$ and $\Sigma$ are isometric, which is a key hypothesis of Bonnet's problem.
\end{remark}

A famous theorem by H. Hopf (see for instance \cite{Ho}) asserts that any compact constant mean curvature surface of genus zero immersed in $\R^3$ is a round sphere. As a consequence of Theorem \ref{mainr3} we obtain a generalization of Hopf's theorem to the case of prescribed (not necessarily constant) antipodally symmetric mean curvature, as we explain next.

In their seminal paper \cite{GG}, B. Guan and P. Guan proved that if $\cH\in C^2(\S^2)$ satisfies
$\cH(-x)=\cH(x)>0$ for all $x\in \S^2$, then there exists a closed
strictly convex sphere $S_{\cH}$ in $\R^3$ with prescribed mean
curvature $\cH$. Note that when $\cH$ is constant, $S_{\cH}$ is a round sphere of radius $1/\cH$. Thus, the next corollary is a wide generalization of Hopf's theorem to the case of non-constant mean curvature:

\begin{corollary}\label{guanuni}
Let $\Sigma$ be an immersed compact surface of genus zero in $\R^3$ with prescribed mean
curvature $\cH\in C^2(\S^2)$, where $\cH(-x)=\cH(x)>0$. Then $\Sigma$ is the Guan-Guan sphere $S_{\cH}$ (up to translation).
\end{corollary}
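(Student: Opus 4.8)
The plan is to obtain the corollary by feeding the Guan--Guan existence theorem into the uniqueness statement of Theorem \ref{mainr3}. First I would invoke the result of B. Guan and P. Guan from \cite{GG}: since $\cH\in C^2(\S^2)$ satisfies $\cH(-x)=\cH(x)>0$ for all $x\in\S^2$, there exists a closed strictly convex sphere $S_{\cH}\subset\R^3$ whose mean curvature satisfies $H_{S_{\cH}}(p)=\cH(\nu(p))$ for every $p\in S_{\cH}$; that is, $S_{\cH}$ has prescribed mean curvature $\cH$ in the sense of Definition \ref{pmcr3}. (By elliptic regularity this sphere is smooth enough, in particular of class $C^3$, so it falls within our standing regularity convention.)

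Next I would apply Theorem \ref{mainr3} with $S=S_{\cH}$. This is legitimate: $S_{\cH}$ is strictly convex and has prescribed mean curvature $\cH$, and $\cH\in C^2(\S^2)\subset C^1(\S^2)$, so all hypotheses of Theorem \ref{mainr3} are met. The theorem then asserts that $S_{\cH}$ is, up to translations, the only immersed compact genus-zero surface in $\R^3$ with prescribed mean curvature $\cH$. Since $\Sigma$ is by assumption such a surface, it follows that $\Sigma$ is a translate of $S_{\cH}$, which is precisely the assertion of the corollary.

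There is no real obstacle in this argument; the only thing one must be careful about is that the hypotheses match up across the two cited results. Concretely, one should check that the orientation and Gauss map conventions used by Guan--Guan agree with Definition \ref{pmcr3} (so that the sign condition $\cH>0$ corresponds consistently to the choice of inward or outward unit normal), and note the slight mismatch in regularity, namely that the $C^2$ hypothesis on $\cH$ is what is needed for the Guan--Guan existence step while Theorem \ref{mainr3} only requires $\cH\in C^1$. Finally, as observed in the text, when $\cH\equiv c$ is a positive constant the Guan--Guan sphere $S_{\cH}$ is the round sphere of radius $1/c$, so the corollary recovers the classical Hopf theorem as the special case of constant prescribed mean curvature; this is a remark about the statement, not a step in the proof.
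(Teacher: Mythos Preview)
Your argument is correct and matches the paper's own proof exactly: the corollary is stated to follow directly from Theorem~\ref{mainr3} together with the existence of the Guan--Guan strictly convex spheres in \cite{GG}. Your additional remarks on regularity and orientation conventions are appropriate due diligence but do not alter the approach.
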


Corollary \ref{guanuni} follows directly from Theorem \ref{mainr3} and the existence of the Guan-Guan strictly convex spheres in \cite{GG} mentioned above.

We have organized the paper as follows. In Section \ref{sec2} we
give some basic preliminaries about the geometry of simply connected
homogeneous three-manifolds $X$ not isometric to $\S^2(\kappa)\times \R$, and explain how they admit an underlying Lie group structure. That is, they can be seen as \emph{metric Lie groups}.
In Section \ref{sec3} we consider
conformally immersed surfaces $\psi:\Sigma\flecha X$ in metric Lie groups, and deduce an equation that links the Gauss map and
the mean curvature of $\psi$. This can be seen as a Weierstrass type
representation for surfaces in metric Lie groups with given mean
curvature and Gauss map, in the spirit of the classical Kenmotsu
formula \cite{Ke}.

In Section \ref{sec4} we prove our main uniqueness result (Theorem
\ref{main}) about prescribed mean curvature spheres in metric Lie groups $X$. Specifically, we prove that the existence of a compact surface $S$ of genus zero in $X$ with prescribed mean curvature $\cH\in C^1(\S^2)$ and whose Gauss map is a diffeomorphism to $\S^2$ implies the existence of a (non-holomorphic) complex quadratic differential for surfaces of prescribed mean curvature $\cH$ in $X$, which vanishes identically 	on open pieces of $S$ and only has isolated zeros of negative index for any
other surface. Theorem \ref{main} follows then from the existence of this
Hopf type differential by the Poincaré-Hopf theorem, and reduces to Theorem \ref{mainr3} when the homogeneous manifold $X$ is the Euclidean space $\R^3$.

In Section \ref{sec5} we give some final remarks on our results; in particular we consider the case where the genus of the compact surface $\Sigma$ is positive, and we show the necessity of the hypothesis that the Gauss map of $S$ is a diffeomorphism for Theorem \ref{main} and Theorem \ref{mainr3} to hold.

In the particular case that $\cH$ is constant, Theorem \ref{main}
follows from results by Hopf \cite{Ho} when $X$
has constant curvature, by Abresch and Rosenberg \cite{AR1,AR2} when
$X$ is rotationally symmetric, by Daniel and Mira \cite{DM} when
$X={\rm Sol}_3$, and by Meeks, Mira, Pérez and Ros \cite{mmpr0} for
general $X$. In particular, our proof is inspired by the study in \cite{DM,mmpr0} of that constant mean curvature case.

\section{Homogeneous three-manifolds}\label{sec2}

In this section we explain some basic geometric facts regarding
homogeneous three-manifolds. More specific details may be consulted
in ~\cite{mmpr0,mpe11}.

Let $\bar{M}^3$ be a homogeneous, simply connected Riemannian
three-manifold, and assume that $\bar{M}^3$ is not isometric to the
product space $\S^2(\kappa)\times \R$ of a two-dimensional sphere
$\S^2(\kappa)$ with the real line. Then $\bar{M}^3$ is diffeomorphic
to $\R^3$ or $\S^3$, and is isometric to a \emph{metric Lie group},
i.e. a three-dimensional simply connected Lie group $X$ furnished with a left
invariant metric $\langle ,\rangle $.

The isometry group of $\bar{M}^3$ has dimension six, four or three.
When the dimension is six, $\bar{M}^3$ has constant curvature. When
the dimension is four, $\bar{M}^3$ is rotationally symmetric, and is
one of the Riemannian fibrations $\Ek$, i.e. the product spaces
$\H^2(\kappa)\times \R$ and $\S^2(\kappa)\times \R$ for $\kappa\neq
0, \tau=0$, the Heisenberg space ${\rm Nil}_3$ for $\kappa=0,
\tau\neq 0$, and some rotational metrics on ${\rm SU}(2)$ or the
universal cover of ${\rm SL}(2,\R)$ if $\tau\neq 0$ and $\kappa\neq
0$. See \cite{D1} for the details. A generic homogeneous
three-manifold has isometry group of dimension three, and the
identity component is generated by the group of left translations of
$X$, when we view $X$ as a metric Lie group.

It is important to observe that the homogeneous three-manifolds
$\R^3, \H^3$ and $\E(\kappa,\tau)$ with $\kappa<0$ admit more than
one Lie group structure for which the metric is left invariant. That
is, these homogeneous three-manifolds are isometric to at least two
metric Lie groups $X,X'$ that are non-isomorphic as Lie groups.

For computational purposes, it will be useful to divide the class of
metric Lie groups $X$ into two cases: \emph{unimodular} metric Lie
groups and \emph{non-unimodular} metric Lie groups. (See
\cite{mpe11} for some equivalent definitions of unimodularity,
although we will not use the concept itself, just the resulting
classification of Lie groups). We note that $\R^3$ and $\S^3$ are
unimodular, while $\H^3$ is non-unimodular.

\subsection{Unimodular metric Lie groups.}
\label{unimod} Let $X$ be a three-dimensional unimodular metric Lie
group. Then, there exists a left invariant orthonormal frame
$\{E_1,E_2,E_3\}$ in $X$ which satisfies the following structure
equations:
\begin{equation}
\label{eq:11}
 [E_2,E_3]=c_1E_1,\quad [E_3,E_1]=c_2E_2, \quad
[E_1,E_2]=c_3E_3,
\end{equation}
for certain constants $c_1,c_2,c_3\in \R$, among which  at most one
$c_i$  is negative. We call $\{E_1,E_2,E_3\}$ the \emph{canonical
frame} of $X$.

Two unimodular metric Lie groups with the same structure constants
are isometric and isomorphic. Two unimodular metric Lie groups with
the same signature for the triple $(c_1,c_2,c_3)$ are isomorphic,
but not isometric in general. If $c_1=c_2=c_3$, then $X$ has
constant curvature. If two of the constants $c_i$ coincide, $X$ has
an isometry group of dimension four, and hence is rotationally
symmetric.

The table below shows the six possible different Lie group
structures depending on the signature of $(c_1,c_2,c_3)$. Each
horizontal line corresponds to a unique Lie group structure; when
all the structure constants are different, the isometry group of $X$
is three-dimensional.

 \par
\vspace{.1cm}
 \begin{center}

 \begin{tabular}{|c|c|c|c|}
        \hline
        Signs of $c_1,c_2,c_3$ & $\dim \mbox{Isom}(X)=3$ & $\dim
        \mbox{Isom}(X)=4$
        & $\dim \mbox{Isom}(X)=6$ \\ \hline
\rule{0cm}{.4cm}
        +,\ +,\ + & $\su$ & $\esf^3_{\mbox{\tiny Berger}}
        =\E (\kappa,\tau ), \kappa>0 $ & $\esf^3(\kappa )$ \\
\rule{0cm}{.4cm}
        +,\ +,\ -- & $\sl $ & $\E (\kappa,\tau ), \kappa<0$ & $\emptyset$\\
\rule{0cm}{.4cm}
        +,\ +,\ 0 & $\widetilde{\mbox{\rm E}}(2)$ & $\emptyset$
        & $(\widetilde{\mbox{\rm E}}(2),\mbox{flat})$ \\
\rule{0cm}{.4cm}
        +,\ --,\ 0 & Sol$_3$ & $\emptyset$ & $\emptyset$\\
\rule{0cm}{.4cm}
        +,\ 0,\ 0 & $\emptyset$ & Nil$_3=\E (0,\tau )$, $\tau \neq 0$ & $\emptyset$  \\
\rule{0cm}{.4cm}
        0,\ 0,\ 0 & $\emptyset$ & $\emptyset$ & $\R^3$ \\
        \hline
      \end{tabular}
      \\ \mbox{}
\end{center}
{\sc Table 1.} All three-dimensional, simply connected unimodular metric
Lie groups. Here, $\sl$ is the universal cover of $\mathrm{SL}(2,\R)
$, $\EE $ the universal cover of the group of orientation preserving
Euclidean isometries of $\R^2$, $\sol$ is the universal cover of the
group of orientation preserving isometries of the Lorentzian plane,
Nil$_3$ is Heisenberg group of real upper triangular $3\times3$
matrices, and $\R^3$ is the abelian group.
\par
\vspace{.2cm}

If we write

\begin{equation} \label{def:mu}
\mu _1=\frac{1}{2}(-c_1+c_2+ c_3),\quad  \mu _2=\frac{1}{2}(c_1-c_2+
c_3),\quad  \mu _3=\frac{1}{2}(c_1+c_2-c_3),
\end{equation}
the Riemannian connection of $(X,\esiz,\esde)$ is given by
\begin{equation}
\label{eq:LCunim} \nabla _{E_i}E_j=\mu _i\, E_i\times E_j,\quad
i,j\in \{ 1,2,3\}.
\end{equation}
Here, $\times $ denotes the cross product associated to $\langle
,\rangle $ and to the orientation on $X$ defined by declaring
$(E_1,E_2,E_3)$ to be a positively oriented basis.

\subsection{Non-unimodular metric Lie groups.}\label{nonunims}

Let $X$ be a three-dimensional, simply connected non-unimodular
metric Lie group. Then we can view $X$ as a semi-direct product
$X=\R^2\rtimes_A \R$ endowed with its canonical metric, as we
explain next.

Let $A$ be a $2\times 2$ matrix with trace $2$, which we write in
the form \begin{equation} \label{Axieta} A=A(a,b)= \left(
\begin{array}{cc}
1+a & -(1-a)b\\
(1+a)b & 1-a \end{array}\right), \hspace{1cm} a,b\in [0,\infty).
\end{equation}
Then we can consider the metric Lie group $\R^2\rtimes_A
\R$ given as $(\R^3\equiv \R^2\times \R,*,\esiz,\esde)$ where:
 \begin{enumerate}
 \item
The Lie group operation $*$ is given by
\begin{equation}
\label{eq:5}
 ({\bf p}_1,z_1)*({\bf p}_2,z_2)=({\bf p}_1+ e^{z_1 A}\  {\bf
 p}_2,z_1+z_2).
\end{equation}
 \item
The \emph{canonical metric} $\esiz,\esde$ is the left invariant
metric on $\R^2\rtimes_A \R$ (for the product $*$ above) defined by
extending the usual inner product of $\R^3$ at the origin to the
whole space through the left invariant frame $\{ E_1,E_2,E_3\} $ of
$\R^2\rtimes_A \R$ given by
\begin{equation}
\label{eq:6*}
 E_1(x,y,z)=a_{11}(z)\partial _x+a_{21}(z)\partial _y,\quad
E_2(x,y,z)=a_{12}(z)\partial _x+a_{22}(z)\partial _y,\quad
 E_3=\partial _z,
\end{equation}
where
\begin{equation}
\label{eq:exp(zA)}
 e^{zA}=\left(
\begin{array}{cr}
a_{11}(z) & a_{12}(z) \\
a_{21}(z) & a_{22}(z)
\end{array}\right).
\end{equation}
 \end{enumerate}
In this way, $\{E_1,E_2,E_3\}$ becomes a left invariant orthonormal
frame on $\R^2\rtimes_A \R$, which we call the \emph{canonical
frame} of the space.

In terms of $A$, the Lie bracket relations are:
\begin{equation}
\label{eq:8a} [E_1,E_2]=0, \quad [E_3,E_1]=(1+a)E_1+b (1+a) E_2,
\quad
 [E_3,E_2]=b(a-1) E_1+(1-a) E_2.
 \end{equation}
From there, the Levi-Civita connection of $\R^2\rtimes_A \R$ is
given by {\large
\begin{equation}
\label{eq:12}
\begin{array}{l|l|l}
\rule{0cm}{.5cm} \nabla _{E_1}E_1=(1+a)\, E_3 & \nabla
_{E_1}E_2=ab\, E_3
& \nabla _{E_1}E_3=-(1+a)\, E_1-ab \, E_2 \\
\rule{0cm}{.5cm} \nabla _{E_2}E_1=ab\, E_3 & \nabla
_{E_2}E_2=(1-a)\, E_3
& \nabla _{E_2}E_3=-ab\, E_1-(1-a)\, E_2 \\
\rule{0cm}{.5cm} \nabla _{E_3}E_1=b\, E_2 & \nabla _{E_3}E_2=-b\,
E_1 & \nabla _{E_3}E_3=0.
\end{array}
\end{equation}
}

The \emph{Cheeger constant} ${\rm Ch}(\R^2\rtimes_A \R)$ of
$\R^2\rtimes_A \R$ is ${\rm trace}(A)=2$. We also note that every
leaf of the foliation $\mathcal{F}= \{ \R^2\rtimes _A\{ z\} \mid
z\in \R \}$ has constant mean curvature $H=\mbox{trace}(A)/2=1$ with
respect to the unit normal vector field $E_3$. In particular, by the
mean curvature comparison principle, there are no immersed compact
surfaces in $\R^2\rtimes_A \R$ with mean curvature function $|H|>1$
at every point. In other words, the \emph{critical mean curvature}
of $\R^2\rtimes_A \R$ is $1$ (see \cite{mpe11,mmpr0,mmpr2}).

This construction of $\R^2\rtimes_A \R$ that we have just carried out
recovers (up to homothety) all non-unimodular metric Lie groups:

{\bf Fact (see \cite{mpe11}):} Let $X$ be a simply connected
non-unimodular three-dimensional metric Lie group, with its metric
rescaled so that ${\rm Ch} (X)=2$. Then $X$ is isomorphic and
isometric to the semi-direct $\R^2\rtimes_A \R$ with its canonical
metric and $A$ given by \eqref{Axieta} for some $a,b\in [0,\8)$,

The hyperbolic three-space $\H^3$ of constant curvature $-1$ is the
semi-direct product $\R^2\rtimes_A \R$ with $A=I_2$. Similarly, if
$a=1,b=0$ we recover the product space $\H^2(-4)\times \R$.

\subsection{The Gauss map for surfaces in metric Lie groups}

Let $\psi:\Sigma\flecha X$ be an immersed oriented surface in a metric Lie group $X$, and let $N:\Sigma\flecha TX$ denote its unit normal. Note that for any $x\in X$ the left translation $l_x :X\flecha X$ is an isometry of $X$. Thus, for every $p\in \Sigma$ there exists a unique unit vector $\nu(p)\in T_e X$, $|\nu(p)|=1$, such that $$ dl_{\psi(p)} (\nu(p)) = N(p), \hspace{1cm }\forall p\in \Sigma,$$ where $e$ denotes the identity element of $X$.

\begin{definition}\label{defgm}
We call the map $\nu:\Sigma\flecha \S^2=\{v\in T_e X : |v|=1\}$ the \emph{Gauss map} of the oriented surface $\psi:\Sigma\flecha X$.
\end{definition}

The Gauss map $\nu$ can be easily written in coordinates as follows: let $\{E_1,E_2,E_3\}$ be a left invariant orthonormal frame of $X$, and write $N=\sum_{i=1}^3 \nu_i E_i$ for the unit normal $N$ of $\psi$. Then the Gauss map $\nu:\Sigma\flecha \S^2\subset T_e X$ is written with respect to the orthonormal basis of the Lie algebra $\{(E_1)_e,(E_2)_e,(E_3)_e\}$ as $\nu= (\nu_1,\nu_2,\nu_3):\Sigma\flecha \S^2$.

Note that if $X$ is the Euclidean three-space $\R^3$, we have $dl_x ={\rm Id}$ for every $x\in \R^3$; thus, the Gauss map $\nu$ in Definition \ref{defgm} is the natural extension to metric Lie groups of the usual Gauss map of surfaces in $\R^3$.

Also note that we can extend the notion of surfaces with prescribed mean curvature in terms of the Gauss map in $\R^3$ (see Definition \ref{pmcr3}) to the case of metric Lie groups:

\begin{definition}\label{pmcx}
\emph{Let $X$ be a metric Lie group, and $\cH\in C^1(\S^2)$. An immersed oriented surface $\Sigma$ in $X$ with Gauss map
$\nu:\Sigma\flecha \S^2$ is said to have \emph{prescribed mean
curvature} $\cH$ if} $$H_{\Sigma} (p)= \cH(\nu(p))$$
\emph{for every $p\in \Sigma$, where $H_{\Sigma}$ is the mean
curvature of $\Sigma$.}
\end{definition}

\begin{remark}
When $X$ is the hyperbolic three-space $\H^3$ or the sphere $\S^3$, the left invariant Gauss map $\nu:\Sigma\flecha \S^2$ of Definition \ref{pmcx} is usually called the \emph{normal Gauss map}. The problem of prescribing a mean curvature function and the normal Gauss map in $\H^3$ or $\S^3$ has been treated, for instance, in \cite{Ko,AA1,AA2}.

A different but also natural choice of Gauss map for surfaces in $\H^3$ (which we do not treat here) is the \emph{hyperbolic Gauss map}; see \cite{EGM} for the study of a Christoffel-Minkowski problem in $\H^{n+1}$ in terms of the hyperbolic Gauss map.
\end{remark}

\begin{remark}
In $\R^3$ the Gauss map of a surface $S\subset \R^3$ is a diffeomorphism into $\S^2$ if and only if $S$ is a strictly convex ovaloid. In particular, $S$ is embedded. This is not true in general when we substitute $\R^3$ by a metric Lie group $X$. For instance, in some homogeneous three-manifolds diffeomorphic (but not isometric) to $\S^3$ there exist constant mean curvature spheres which are not embedded, but whose Gauss maps are diffeomorphisms into $\S^2$ (see \cite{mmpr0,To}).
\end{remark}

\subsection{The potential function of $X$}

The next definition is a slight reformulation of the concept of
$H$-potential of a three-dimensional metric Lie group in
\cite{mmpr0}. It will play an important role in our computations in
the next two sections.

\begin{definition}
\label{HP} The \emph{potential function} of the space $X$ is the map
$R(H,q):\R\times\bar{\C}\flecha \bar{\C}$ given by
 \begin{equation}\label{defpot}
R(H,q)= H(1+|q|^2)^2 + \Theta (q),
 \end{equation}
where:
 \begin{enumerate}
\item If $X$ is unimodular, then $$\Theta (q) =
-\frac{i}{2}\left(
 \mu_2|1+q^2|^2+\mu _1|1-q^2|^2+4\mu _3|q|^2\right),$$ where $\mu _1,\mu _2,\mu _3\in \R$ are the
related numbers defined in~(\ref{def:mu}).
  \item
If $X$ is non-unimodular, and we rescale its metric as explained in
Subsection \ref{nonunims} so that ${\rm Ch} (X) =2$, then $$ \Theta
(q)= -(1-|q|^4)-a\left( q^2-\overline{q}^2\right) -ib \left(
2|q|^2-a\left( q^2+\overline{q}^2\right) \right)$$ where $a,b \geq
0$ are the constants appearing in~(\ref{Axieta}) when we view $X$ as
the semi-direct product $\R^2 \rtimes_A \R$.
 \end{enumerate}
\end{definition}

We will say that the potential $R$ for $X$ has a zero at $q_0=\8\in
\overline{\C}$ if $\lim_{q\to\8} R(H,q)/|q|^4=0$ for every $H$.

The zeros of $R$ are related to the existence of two-dimensional
subgroups of $X$, as follows: $R(H_0,q_0)=0$ in $X$ if and only if
there exists a two dimensional subgroup in $X$ with (constant) Gauss
map $q_0$ and (constant) mean curvature $H_0$ (see Corollary 3.17 in \cite{mpe11}).

The potential function $R$ has no zeros if $X$ is compact (i.e. if
$X$ is diffeomorphic to $\S^3$). If $X$ is unimodular and $H_0\neq
0$, then $R(H_0,q)\neq 0$ for all $q\in \bar{\C}$. If $X$ is
non-unimodular, rescaled so that ${\rm Ch}(X)=2$ as explained in
Subsection \ref{nonunims}, and if $|H_0|>1$, then $R(H_0,q)\neq 0$
for every $q\in \bar{\C}$. Thus we have:

\begin{lemma}\label{regR}
Assume that $X$ is not compact, and let $\mathfrak{h}_0(X)\geq 0$ be
the number given by $$\left\{ \begin{array}{lll} \text{
$\mathfrak{h}_0(X)=0$} & \text{if} & \text{$X$ is unimodular.} \\
\text{ $\mathfrak{h}_0(X)=1$} & \text{if} & \text{$X$ is
non-unimodular, rescaled to ${\rm Ch}(X)=2$.} \end{array}\right.$$
Then $R(H,q)\neq 0$ for every $q\in \bar{\C}$ and every $H$ with
$|H|>\mathfrak{h}_0(X).$
\end{lemma}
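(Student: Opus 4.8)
The lemma is a compact restatement of the three observations made immediately before it, so it is enough to establish those. The plan is to split into the unimodular and non-unimodular cases of Definition \ref{HP}, and in each case to analyse the real part of $R(H,q)$ for $q\in\C$ together with the weighted limit $\lim_{q\to\8}R(H,q)/|q|^4$ that governs the behaviour at $q_0=\8$. The guiding remark is that $H(1+|q|^2)^2$ is always real, so a zero of $R(H,q)$ would force $\Re R(H,q)=0$; and in both cases $\Re R(H,q)$ turns out to be an explicit function of $|q|$ alone.

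\emph{Unimodular case} ($\mathfrak{h}_0(X)=0$). Here $\Theta(q)=-\tfrac{i}{2}\bigl(\mu_2|1+q^2|^2+\mu_1|1-q^2|^2+4\mu_3|q|^2\bigr)$ with $\mu_1,\mu_2,\mu_3\in\R$, and since $|1\pm q^2|^2$ and $|q|^2$ are real, $\Theta(q)$ is purely imaginary for every $q\in\C$. Hence $\Re R(H,q)=H(1+|q|^2)^2$, which is nonzero whenever $H\neq0$ because $(1+|q|^2)^2>0$; thus $R(H,q)\neq0$ for all finite $q$. At $q_0=\8$, using $|1\pm q^2|^2/|q|^4\to1$ and $|q|^2/|q|^4\to0$, one finds $R(H,q)/|q|^4\to H-\tfrac{i}{2}(\mu_1+\mu_2)=H-\tfrac{i}{2}c_3$, whose real part is $H\neq0$; so $R$ does not vanish at $\8$ either. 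This is the assertion for $|H|>\mathfrak{h}_0(X)=0$.

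\emph{Non-unimodular case} ($\mathfrak{h}_0(X)=1$). After rescaling to ${\rm Ch}(X)=2$ we have $\Theta(q)=-(1-|q|^4)-a(q^2-\overline{q}^2)-ib\bigl(2|q|^2-a(q^2+\overline{q}^2)\bigr)$. Since $q^2-\overline{q}^2$ is purely imaginary and $2|q|^2-a(q^2+\overline{q}^2)$ is real, the last two summands are purely imaginary, so $\Re R(H,q)=H(1+|q|^2)^2+|q|^4-1$. Writing $t=|q|^2\in[0,\8)$ and factoring $1-t^2=(1-t)(1+t)$, the condition $\Re R(H,q)=0$ is equivalent to $H=(1-t)/(1+t)$; but $t\mapsto(1-t)/(1+t)$ is strictly decreasing on $[0,\8)$ with image $(-1,1]$, so it has no solution when $|H|>1$. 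Hence $\Re R(H,q)\neq0$, and therefore $R(H,q)\neq0$, for every finite $q$. At $q_0=\8$ the $a$- and $b$-dependent terms are $O(|q|^2)$ and disappear in the quotient, leaving $R(H,q)/|q|^4\to H+1\neq0$ since $|H|>1$. This is the assertion for $|H|>\mathfrak{h}_0(X)=1$.

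The computations are elementary and there is no serious obstacle. The two points that require a little care are: (i) isolating $\Re R(H,q)$, i.e. recognizing which summands of $\Theta(q)$ are real and which are purely imaginary --- $\Theta$ is purely imaginary in the unimodular case, while only $-(1-|q|^4)$ contributes to $\Re\Theta$ in the non-unimodular case; and (ii) pinning down the exact image $(-1,1]$ of $t\mapsto(1-t)/(1+t)$, which is precisely what forces the threshold $\mathfrak{h}_0(X)=1$ --- the critical mean curvature of $\R^2\rtimes_A\R$ --- and the strict inequality $|H|>1$. Handling $q_0=\8$ amounts only to inserting these expressions into the weighted limit from the definition preceding the lemma.
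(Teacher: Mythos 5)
Your proposal is correct, and it follows the same route the paper takes: the paper simply asserts, in the paragraph preceding the lemma, that $R$ has no zeros for unimodular $X$ when $H\neq 0$ and for non-unimodular $X$ (normalized to ${\rm Ch}(X)=2$) when $|H|>1$, and your real-part computation (with $\Theta$ purely imaginary in the unimodular case and $\Re\Theta=|q|^4-1$ in the non-unimodular case, plus the limits at $q=\infty$) is exactly the elementary verification of those assertions that the paper leaves implicit.
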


\section{An elliptic PDE for the Gauss map}\label{sec3}

A theorem by Kenmotsu \cite{Ke} proves that a necessary and
sufficient condition for a map $g:\Sigma\flecha \bar{\C}$ from a
simply connected Riemann surface $\Sigma$ to be the Gauss map of a
conformal immersion $\psi:\Sigma\flecha \R^3$ with a given mean
curvature function $H:\Sigma\flecha (0,\8)$ is that $$g_{z\bar{z}}=
\frac{2 \bar{g}}{1+|g|^2} g_z g_{\bar{z}} + \frac{H_{\bar{z}}}{H}
g_z,$$ for any conformal parameter $z$ of $\Sigma$. Moreover, if this equation holds, the immersion $\psi$ can be
recovered from $g,H$ by an integral representation formula.

In this section we extend this theorem to the general case where the
ambient space is an arbitrary simply connected homogeneous
three-manifold $X$ not isometric to $\S^2(\kappa)\times \R$. This
also extends the Weierstrass type representation for CMC surfaces in
homogeneous manifolds in \cite{mmpr0}, and the work by Aiyama and
Akutagawa \cite{AA1,AA2} for prescribed non-constant mean curvature
surfaces in $\H^3$ and $\S^3$.

In the next result $\Sigma$ denotes a Riemann surface and $z$ an
arbitrary conformal parameter on $\Sigma$. We identify the Gauss map
$\nu:\Sigma\flecha \S^2$ of an oriented surface $\psi:\Sigma\flecha
X$ with its south pole stereographic projection $g$, i.e. if $N =
\sum \nu_i E_i$ with respect to the canonical frame
$\{E_1,E_2,E_3\}$ of $X$, then
\begin{equation}\label{genu}
g= \frac{\nu_1 + i \nu_2}{1+\nu_3}:\Sigma\flecha \bar{\C}.
\end{equation}
Also, $R(H,q):\R \times \bar{\C}\flecha \bar{\C}$ will denote the
potential function of $X$ (see Definition \ref{HP}).
\begin{theorem}\label{rep}
Let $\psi:\Sigma\flecha X$ be a conformally immersed oriented
surface, and let $H:\Sigma\flecha \R$ and $g:\Sigma\flecha \bar{\C}$
denote its mean curvature and Gauss map, respectively. Then $(g,H)$
satisfy the conformally invariant complex elliptic PDE

\begin{equation}\label{gaussmap}
g_{z\bar{z}} = \frac{R_q}{R}(H,g) g_z g_{\bar{z}} + \left(
\frac{R_{\bar{q}}}{R} -
\frac{\overline{R_q}}{\overline{R}}\right)(H,g) |g_z|^2 +
\frac{R_H}{R} (H,g) H_{\bar{z}} g_z,
\end{equation}
at all points $p\in \Sigma$ with $R(H(p),g(p))\neq 0$.

Conversely, let $\Sigma$ be simply connected, and let
$H:\Sigma\flecha \R$ and $g:\Sigma \flecha \bar{\C}$ satisfy:

\begin{enumerate}
  \item
$R(H(p),g(p))\neq 0$ for every $p\in \Sigma$.
 \item
$g_z(p)\neq 0$ for every\footnote{If $g(p)=\8$, the condition $g_z(p)\neq 0$ should be interpreted as $\lim_{z\to p} g_z(p)/g(p)^2 \neq 0$.} $p\in \Sigma$.
 \item
$(H,g)$ are a solution to \eqref{gaussmap}.
\end{enumerate}
Then there exists a conformal immersion $\psi:\Sigma\flecha X$,
unique up to left translations, with Gauss map $g$ and mean
curvature $H$.
\end{theorem}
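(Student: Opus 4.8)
The strategy is to express the immersion data in the canonical left-invariant orthonormal frame $\{E_1,E_2,E_3\}$ of $X$ and to translate the geometry of $\psi$ into a single equation for the pair $(g,H)$, with the potential $R(H,q)$ playing the role of the natural ``integrating factor'' that fuses the mean-curvature contribution $H(1+|q|^2)^2$ with the structure-constant contribution $\Theta(q)$. The direct statement is then a variant of the Ruh--Vilms principle (``the Gauss map of a CMC surface is harmonic''), and the converse is a Weierstrass-type integration carried out through the Maurer--Cartan equation.

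\emph{The direct statement.} Let $\psi\colon\Sigma\to X$ be a conformal immersion with conformal factor $\lambda^2$, unit normal $N=\sum_i\nu_i E_i$, and Gauss map $g$ as in \eqref{genu}. I would first record that $\psi_z=\sum_i a_iE_i$ satisfies $\sum a_i^2=0$, $\sum|a_i|^2=\lambda^2/2$ and $\sum a_i\nu_i=0$; since the complex isotropic vectors orthogonal to the real unit vector $\nu$ form a one-dimensional complex line, this forces $\psi_z=\phi\,\eta(g)$, where $\phi$ is a nowhere-vanishing complex function and $\eta(g)\in\mathfrak{g}\otimes\C$ is the standard isotropic vector attached to $g$ by the inverse stereographic formulas, hence explicit in $g$. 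I would then differentiate the defining relation $N=dl_{\psi}(\nu)$: computing $\nabla_{\psi_z}N$ from the Levi-Civita formulas \eqref{eq:LCunim} (unimodular) or \eqref{eq:12} (non-unimodular) produces an expression in $g$, $g_z$, $\bar g_z$, $\phi$ and the structure constants, while the Weingarten equation gives $\nabla_{\psi_z}N=-H\,\psi_z+Q\,\psi_{\bar z}$, where $Q\,dz^2$ is a multiple of the Hopf differential of $\psi$ and $H$ enters through $\langle\nabla_{\psi_z}N,\psi_{\bar z}\rangle=-H\lambda^2/2$. Pairing the two expressions against suitable vectors (using $\langle\eta(g),\eta(g)\rangle=0$, $\langle\eta(g),\nu\rangle=0$, $\langle\eta(g),\overline{\eta(g)}\rangle\neq0$) first isolates $\phi$ as an explicit function of $g$, its first derivatives, and $H$; differentiating once more and using the Codazzi equation to trade the derivative of the Hopf differential for a derivative of $H$ then eliminates $Q$ and delivers a second-order equation for $g$. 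The final step is the bookkeeping that identifies this equation with \eqref{gaussmap}: the structure-constant and ambient-curvature terms must be checked to assemble exactly into $\Theta(q)$, so that the coefficients collapse into the logarithmic derivatives $R_q/R$, $R_{\bar q}/R-\overline{R_q}/\overline{R}$ and $R_H/R$. This matching has to be performed separately in the two cases of Definition \ref{HP}.

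\emph{The converse.} Now let $\Sigma$ be simply connected and let $(H,g)$ satisfy (1)--(3). I would \emph{define} the $\mathfrak{g}$-valued $1$-form $\omega=\phi\,\eta(g)\,dz+\overline{\phi\,\eta(g)}\,d\bar z$, with $\phi$ given by the explicit formula obtained in the direct part; hypotheses (1) and (2) guarantee that $\phi$ is well-defined and nowhere zero. Because $\eta(g)$ is isotropic, the symmetric square $\langle\omega,\omega\rangle$ has no $dz^2$ or $d\bar z^2$ part, and its $|dz|^2$-part is a positive multiple of $|\phi|^2(1+|g|^2)^2$. The crux is that the Maurer--Cartan equation $d\omega+\tfrac12[\omega\wedge\omega]=0$, expanded using the bracket relations \eqref{eq:11}, resp.\ \eqref{eq:8a}, reduces after clearing the nonvanishing factors built from $R(H,g)$ and $g_z$ to precisely equation \eqref{gaussmap}; hence $\omega$ satisfies Maurer--Cartan. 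By the fundamental theorem for maps into a simply connected Lie group there is then $\psi\colon\Sigma\to X$, unique up to left translation, with $dl_{\psi^{-1}}\circ d\psi=\omega$. The displayed form of $\langle\omega,\omega\rangle$ shows that $\psi$ is a conformal immersion; since $\eta(g)$ is isotropic and orthogonal to $\nu(g)$, the real $2$-plane spanned by its real and imaginary parts is exactly $\nu(g)^{\perp}$, so the left-invariant Gauss map of $\psi$ is $g$ (with the same orientation normalization as in the direct part). Finally one checks that $H$ is the mean curvature of $\psi$: re-running the Weingarten computation of the direct part backwards from $\omega$ recovers $\nabla_{\psi_z}N=-H\,\psi_z+Q\,\psi_{\bar z}$, and reading off $\langle\nabla_{\psi_z}N,\psi_{\bar z}\rangle$ gives $H_\psi=H$. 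Uniqueness up to left translations is contained in the fundamental theorem just invoked.

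\emph{Main obstacle.} I expect the real work to be twofold. In the direct statement, the delicate point is the explicit verification that the terms coming from \eqref{eq:LCunim}, resp.\ \eqref{eq:12}, combine into $\Theta(q)$ and that the whole expression collapses into the logarithmic-derivative coefficients of \eqref{gaussmap}---a computation of known type but genuinely intricate, and one that forces the unimodular/non-unimodular dichotomy. In the converse, the subtle step is confirming that the function $H$ inserted into the formula for $\phi$ really is the mean curvature of the integrated surface, rather than merely a parameter; this works precisely because $H$ enters both \eqref{gaussmap} and the formula for $\phi$ through the mean-curvature slot of the Weingarten equation.
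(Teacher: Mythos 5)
Your proposal is correct in substance and shares the paper's core machinery (the decomposition $\psi_z=\phi\,\eta(g)$ in the canonical left-invariant frame, with $\phi$ forced to be a multiple of $\bar g\,g_z/R(H,g)$, and a Frobenius/Maurer--Cartan integration for the converse), but your derivation of the direct statement follows a genuinely different and heavier route than the paper's. The paper never touches the Weingarten map, the Hopf differential or the Codazzi equation: it uses only the elementary identity $\nabla_{\psi_{\bar z}}\psi_z=\tfrac{\lambda H}{2}N$ of \eqref{napi}, valid for any conformal immersion, writes it in the frame, and extracts two first-order component relations, \eqref{rf4} and \eqref{rf40}; comparing them isolates $\eta=4\bar g\,g_z/R(H,g)$ as in \eqref{rf10}, and differentiating this formula and feeding it back into \eqref{rf4} yields \eqref{gaussmap} directly. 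Your plan of computing $\nabla_{\psi_z}N$, reading off $\phi$ and the Hopf coefficient $Q$, and then invoking Codazzi to eliminate the derivative of $Q$ can also be made to work, but note that in a general metric Lie group the Codazzi equation carries an ambient curvature term built from the structure constants (the space is not a space form), so the step ``trade $Q_{\bar z}$ for $H_z$'' is not clean as stated and adds precisely the kind of bookkeeping that the paper's shortcut avoids; what buys you this is a conceptually familiar framework (Weingarten plus Codazzi), what the paper's route buys is that \eqref{gaussmap} falls out of first-order frame identities alone. For the converse, your Maurer--Cartan formulation is essentially the paper's argument (which defers the integration to Theorem 3.7 of \cite{mmpr0}), with two small caveats: the accurate claim is that \eqref{gaussmap} together with the chosen formula for $\phi$ implies Maurer--Cartan (the latter amounts to three real scalar equations while \eqref{gaussmap} is only two, the third being absorbed by the special algebraic form of $\phi\,\eta(g)$), and the computational heart of this step, which your sketch glosses over, is that the $H_{\bar z}$ terms produced by differentiating $R(H,g)$ cancel against those introduced when \eqref{gaussmap} is substituted for $g_{z\bar z}$, as in \eqref{ecig}; this cancellation is exactly what reduces the non-constant mean curvature case to the constant mean curvature computation of \cite{mmpr0}.
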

\begin{proof}
We will only prove the result for the case that $X$ is unimodular; the argument when $X$ is non-unimodular is exactly the same but some intermediate expressions are different because of the difference between the potential functions of unimodular and non-unimodular spaces.

Let $\psi:\Sigma \flecha X$ be a conformal immersion with unit normal $N:\Sigma\flecha TX$, and denote
$\esiz d\psi,d\psi\esde = \landa |dz|^2$, where $z$ is a local
conformal parameter on $\Sigma$. Let $\{E_1,E_2,E_3\}$ be the
canonical left invariant orthonormal frame of $X$, as explained in
Section \ref{sec2}, $\nu:\Sigma\flecha \S^2$ be the Gauss map of
$\psi$ and $g$ be the Gauss map after stereographic projection
\eqref{genu}. We will work around a point $p\in \Sigma$ where $g\neq
0,\8$ and $R(H,g)\neq 0$. We can write
 \begin{equation}\label{asubi}
\psi_z =\sum_{i=1}^3 A_i
(E_i \circ \psi), \hspace{ 0.5cm} \psi_{\bar{z}} =\sum_{i=1}^3
\overline{A_i} (E_i \circ \psi), \hspace{0.5cm} N =\sum_{i=1}^3
\nu_i (E_i \circ \psi),
\end{equation}
for smooth functions $A_i:\Sigma\flecha \C$,
$\nu_i:\Sigma\flecha \R$, $i=1,2,3$. Noting that, from \eqref{genu},
\begin{equation}\label{genu2}
(\nu_1,\nu_2,\nu_3)=\frac{1}{1+|g|^2} (g+\bar{g},i(\bar{g}-g),1-|g|^2),
 \end{equation}
it follows easily from the metric relations $\esiz \psi_z,\nu\esde =
\esiz \psi_{\bar{z}},\nu\esde =0$, $\esiz \nu,\nu\esde=1$ that, if
we denote $A_3= \eta/2$, then
\begin{equation}\label{rf1}
A_1= \frac{\eta}{4}\left(\overline{g} - \frac{1}{\overline{g}}\right), \hspace{0.5cm} A_2 = \frac{i \,\eta}{4}\left(\overline{g} + \frac{1}{\overline{g}}\right), \hspace{0.5cm} A_3 = \frac{\eta}{2}.
\end{equation}
From here,
 \begin{equation}\label{rf2}
 \landa= 2\sum_{i=1}^3 |A_i|^2 = \frac{(1+|g|^2)^2 |\eta|^2}{4|g|^2}.
 \end{equation}

If we let $\nabla$ denote the Levi-Civita connection of $X$, a classical computation from the Gauss-Weingarten equations gives
 \begin{equation}\label{napi}
\nabla_{\psi_{\bar{z}}} \psi_z = \frac{\landa H}{2} \nu.
 \end{equation}
If we now express $\psi_z$ and $\psi_{\bar{z}}$ as in \eqref{asubi} and use the relations between $\nabla$ and $\{E_1,E_2,E_3\}$ given in \eqref{eq:LCunim}, we can write \eqref{napi} in coordinates with respect to $\{E_1,E_2,E_3\}$. Let us use brackets to denote coordinates in the $\{E_1,E_2,E_3\}$ basis. Then, we obtain
 \begin{equation}\label{rf3} \def\arraystretch{1.6} \begin{array}{lll} [(A_1)_{\bar{z}}, (A_2)_{\bar{z}},(A_3)_{\bar{z}}] & = & -\sum_{i,j} \overline{A_i} A_j \nabla_{E_i} E_j + \frac{\landa H}{2} [\nu_1,\nu_2,\nu_3]  \\ & = & [\mu_3 A_2 \overline{A_3} - \mu_2 A_3 \overline{A_2},\mu_3 A_1 \overline{A_3}-\mu_1 A_3 \overline{A_1}, \mu_2 A_1 \overline{A_2}-\mu_1 A_2 \overline{A_1} ]  \\ & & + \frac{\landa H}{2} [\nu_1,\nu_2,\nu_3]. \end{array}\end{equation}
The third equation in \eqref{rf3} gives, using \eqref{rf1} and
\eqref{genu2},
 \begin{equation}\label{rf4}
 \frac{4\eta_{\bar{z}}}{|\eta|^2} = \frac{H (1-|g|^4)}{|g|^2} - \frac{i}{2} \left\{ \mu_1 \left(\frac{1}{\overline{g}} + \overline{g}\right)\left(-\frac{1}{g} + g\right) - \mu_2 \left(\frac{1}{\overline{g}} - \overline{g}\right)\left(\frac{1}{g} + g\right)\right\}
 \end{equation}
A similar process can be done with $(1{\rm st})+i(2{\rm nd})$
equation in \eqref{rf3}, using again \eqref{rf1} and \eqref{genu2}.
In this way we obtain
 \begin{equation}\label{rf40}
 \frac{4\eta_{\bar{z}}}{|\eta|^2} = \frac{4 \overline{g_z}}{\bar{g}\bar{\eta}} - 2H(1+|g|^2)-i \left\{ \mu_1 \left(|g|^2-\frac{\overline{g}}{g}\right) +  \mu_2 \left(|g|^2 + \frac{\overline{g}}{g} \right) +2 \mu_3\right\}.
 \end{equation}
By comparing \eqref{rf40} with
\eqref{rf4}, a computation provides the following expression for $\eta$:
 \begin{equation}\label{rf10}
 \eta= \frac{4 \overline{g} g_z}{R(H,g)}.
 \end{equation}
Differentiating \eqref{rf10}, we get
 \begin{equation}\label{rf11}
 \frac{\eta_{\bar{z}}}{\eta} = \frac{\overline{g_z}}{\overline{g}} +\frac{g_{z\bar{z}}}{g_z} - \frac{R_q}{R}(H,g) g_{\bar{z}} -  \frac{R_{\bar{q}}}{R}(H,g) \overline{g_z} -\frac{R_H}{R} (H,g) H_{\bar{z}}.
 \end{equation}
Finally, comparing \eqref{rf11} with \eqref{rf4} and using (the
conjugate of) \eqref{rf10}, we obtain the Gauss map equation
\eqref{gaussmap}. By smoothness, it also holds when $g=0,\8$ (that
\eqref{gaussmap} extends to the points where $g=\8$ can be easily
checked by working at those points with the smooth function
$\varphi= 1/g$). This completes the proof of the first statement of
the theorem.

The proof of the converse statement follows by a computation using
the Frobenius theorem. Specifically, the argument follows very
closely the proof of Theorem 3.7 in \cite{mmpr0}, which covers the
case where $H$ is constant. We give an outline next.

Let $g:\Sigma\flecha \bar{\C}$ and $H:\Sigma\flecha \R$ satisfy conditions (1), (2), (3) as in the statement of Theorem \ref{rep}, and assume that $\Sigma$ is simply connected. We define $A_1,A_2,A_3:\Sigma\flecha \C$ as in \eqref{rf1}, where $\eta$ is given by \eqref{rf10}; that all the $A_i$'s have finite value at every $p\in \Sigma$ follows from the fact that $R(H,q)/|q|^4$ has a finite limit at $q=\8$. We compute

\begin{equation}\label{eaes}\def\arraystretch{2} \begin{array}{lll} (A_1)_{\bar{z}} & = & \displaystyle \frac{\bar{g}^2 -1}{R(H,g)} g_{z\bar{z}} - \frac{\bar{g}^2-1}{R(H,g)^2} \left( R(H,g)\right)_{\bar{z}} g_z  + \displaystyle \frac{2\bar{g}}{R(H,g)} |g_z|^2 \\  & = &\displaystyle \frac{\bar{g}^2 -1}{R(H,g)} g_{z\bar{z}} - \frac{\bar{g}^2-1}{R(H,g)^2} \left( R_q (H,g) g_{\bar{z}} + R_{\bar{q}} (H,g) \overline{g_z} + R_H (H,g) H_{\bar{z}} \right) g_z \\ & & + \displaystyle \frac{2\bar{g}}{R(H,g)} |g_z|^2.\end{array}\end{equation} Using in this expression that $g$ is a solution to \eqref{gaussmap}, we get
 \begin{equation}\label{ecig}
 (A_1)_{\bar{z}} = \frac{|g_z|^2}{|R(H,g)|^2} \left(\overline{2 g R(H,g) - R_q (H,g) (g^2-1)}\right).
 \end{equation}
Observe that the derivative $H_{\bar{z}}$ in \eqref{eaes} cancels in \eqref{ecig} with the $H_{\bar{z}}$ appearing after substituting $g_{z\bar{z}}$ by its value in \eqref{gaussmap}. The same happens if we work with $A_2$ or $A_3$ instead of $A_1$.

Note that \eqref{ecig} is exactly the same formula as (3.5) in \cite{mmpr0} (with the change of notation of substituting $R(g)$ there by $R(H,g)$ here). In other words, formula (3.5) in \cite{mmpr0} also holds when $H$ is not constant.

At this point, the rest of the proof of the direct statement in Theorem 3.7 of \cite{mmpr0} never uses again that $H$ is constant. Thus, it translates essentially word by word to our situation, up to the change of notation $R(g) \leftrightarrow R(H,g)$ explained above. This finishes the proof of Theorem \ref{rep}.
\end{proof}

\subsection{Remarks}\label{subrem}

\begin{enumerate}
\item
In the proof of Theorem \ref{rep} we established that the metric of
a conformal immersion $\psi:\Sigma\flecha X$ with mean curvature
$H:\Sigma\flecha \R$ and Gauss map $g$ is
 \begin{equation}\label{eqlanda}
\esiz d\psi, d\psi\esde = \landa |dz|^2, \hspace{0.5cm} \landa =
\frac{4 (1+|g|^2)^2}{|R(H,g)|^2} |g_z|^2. \end{equation} Thus, if
$R(H(p),g(p))\neq 0$ for some $p\in\Sigma$, then $g_z(p)\neq 0$.
 \item
In particular, it follows from Lemma \ref{regR} that if $X$ is
compact, or if the mean curvature of $\psi:\Sigma\flecha X$
satisfies $|H|>\mathfrak{h}_0(X)$ when $X$ is non-compact, then
$g_z\neq 0$ everywhere on $\Sigma$.
 \item
The converse of Theorem \ref{rep} provides a Weierstrass
representation formula which recovers a conformal immersion
$\psi:\Sigma\flecha X$ (with $|H|>\mathfrak{h}_0(X)$ if $X$ is
non-compact) from its mean curvature function $H$ and its Gauss map
$g$. Specifically, if $\Sigma$ is simply connected and
$(g,H):\Sigma\flecha \bar{\C}\times \R$ satisfy conditions (1), (2), (3) in the statement of Theorem \ref{rep}, then
we can define $A_i:\Sigma\flecha \C$, $i=1,2,3$, by $$A_1= \frac{\eta}{4}\left(\overline{g} - \frac{1}{\overline{g}}\right),
\hspace{0.5cm} A_2 = \frac{i \,\eta}{4}\left(\overline{g} + \frac{1}{\overline{g}}\right), \hspace{0.5cm} A_3 = \frac{\eta}{2}
, \hspace{0.5cm} \eta= \frac{4 \overline{g} g_z}{R(g)}.
 $$ and the immersion
$\psi:\Sigma\flecha X$ with mean curvature $H$ and Gauss map $g$ may
be recovered from $(g,H)$ by integrating $$\psi_z = \sum_{i=1}^3 A_i
(E_i\circ \psi)$$ on $\Sigma$. Here, $\{E_1,E_2,E_3\}$ is the
canonical frame of $X$. The final formula for $\psi$ involves
intricate integral expressions in terms of the coordinates of $\psi$
and the structure constants of $X$, and so will be omitted here.
 \item
As a consequence of the previous remark we have the following \emph{uniqueness result}: Let $\psi_1,\psi_2:\Sigma\flecha X$ be two conformal immersions with the same mean curvature $H:\Sigma\flecha \R$, the same Gauss map $g:\Sigma\flecha \bar{\C}$, and so that $R(H(z),g(z))\neq 0$ for every $z\in \Sigma$. Then $\psi_2= L \circ \psi_1$ for some left translation $L:X\flecha X$.
 \item
The usual Hopf differential $P\, dz^2$ of $\psi:\Sigma\flecha X$ is defined as $$P= -\esiz \nabla_{\psi_z} N,\psi_z\esde,$$ and can be computed using \eqref{genu2}, \eqref{rf1}, \eqref{rf10} as follows:
\begin{equation}\label{calho}
\def\arraystretch{2} \begin{array}{lll}P & =&  - \displaystyle \sum_{i=1}^3 A_i (\nu_i)_z - \sum_{i,j,k=1}^3 A_i \nu_j A_k \esiz \nabla_{E_i} E_j,E_k\esde \\
 & = &\displaystyle \frac{2}{R(H,g)} g_z \bar{g}_z - \sum_{i,j,k=1}^3 \gamma_{ij}^k A_i \nu_j A_k, \end{array}
\end{equation}
where $\gamma_{ij}^k:= \esiz \nabla_{E_i} E_j,E_k\esde$ is a constant depending on the Lie algebra structure of $X$. Thus, using again the relations \eqref{genu2}, \eqref{rf1}, \eqref{rf10} we see that $P$ can be written in the form
\begin{equation}\label{calho2}
P= \frac{2}{R(H,g)} g_z \bar{g}_z + U(g,\bar{g}) g_z^2,
\end{equation}
where $U(g,\bar{g})$ is a rational expression in $g,\bar{g}$ whose coefficients depend on $\gamma_{ij}^k$.
\end{enumerate}

\section{Prescribed mean curvature spheres: proof of Theorem \ref{main}}\label{sec4}

Let $X$ be a three-dimensional metric Lie group, and $\cH\in
C^1(\S^2)$. In this section we prove:

\begin{theorem}\label{main}
Let $S$ be an immersed sphere in $X$ with prescribed mean curvature
$\cH\in C^1(\S^2)$, $\cH>0$, and assume that the Gauss map $\nu:S\flecha
\S^2$ is a diffeomorphism.

Then any other immersed sphere $\Sigma$ in $X$ with prescribed mean
curvature $\cH$ is a left translation of $S$. In particular,
$\Sigma$ and $S$ are congruent in $X$.
\end{theorem}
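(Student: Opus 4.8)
The plan is to prove Theorem \ref{main} by producing, for every sphere of prescribed mean curvature $\cH$, a quadratic differential $Q\,dz^2$ which vanishes identically on $S$, satisfies a Carleman--Bers--Vekua equation, and hence, by the Poincaré--Hopf theorem applied on the sphere $\Sigma$, must vanish identically; from $Q\equiv 0$ one then recovers $\Sigma$ as a left translation of $S$. This runs parallel to the constant mean curvature arguments in \cite{DM,mmpr0}. First I would pass to the Gauss map equation. Regard $\cH$ via south--pole stereographic projection as a function $\widehat{\cH}$ on $\bar{\C}$, so the prescribed mean curvature condition reads $H=\widehat{\cH}(g)$ for the projected Gauss map $g$ and $H_{\bar z}=\widehat{\cH}_q(g)\,g_{\bar z}+\widehat{\cH}_{\bar q}(g)\,\overline{g_z}$. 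One checks that $R(\widehat{\cH}(q),q)\neq 0$ for all $q\in\bar{\C}$: when $X$ is unimodular this is immediate from $\cH>0$ and Lemma \ref{regR}, and in general it follows from the existence of the conformal immersion $\psi_S$ whose Gauss map is onto $\S^2$, together with Remark (1) after Theorem \ref{rep} (which forces $g_z\neq0$ wherever $R\neq0$). Hence, by Theorem \ref{rep}, the Gauss map $g=g_\Sigma$ of any immersed sphere $\Sigma$ of prescribed mean curvature $\cH$, in a conformal parameter $z$, has $g_z\neq 0$ everywhere and solves the intrinsic elliptic equation obtained from \eqref{gaussmap} by inserting $H=\widehat{\cH}(g)$, of the form $g_{z\bar z}=\mathcal A(g)g_zg_{\bar z}+\mathcal B(g)|g_z|^2$ with $\mathcal A,\mathcal B$ smooth on $\bar{\C}$.

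Next, since $g_S\colon S\to\bar{\C}$ is a diffeomorphism, the quantity $\overline{(g_S)_{\bar z}}/(g_S)_z$ is conformally invariant and descends to a smooth function $b\colon\bar{\C}\to\bar{\C}$, and by \eqref{calho2} the Hopf differential of $S$ equals $F(g_S)\,(g_S)_z^2$ with $F:=\tfrac{2}{R(\widehat{\cH}(\cdot),\cdot)}b+U$ a smooth function on $\bar{\C}$. For an \emph{arbitrary} immersed sphere $\Sigma$ of prescribed mean curvature $\cH$, with Hopf differential $P_\Sigma$, set
$$Q:=P_\Sigma-F(g)\,g_z^2=\frac{2\,g_z}{R(\widehat{\cH}(g),g)}\big(\overline{g_{\bar z}}-b(g)\,g_z\big).$$
Then $Q\,dz^2$ is a smooth quadratic differential on $\Sigma$ (the apparent singularities at $g=0,\infty$ are removable, as in the proof of Theorem \ref{rep}) which vanishes identically when $\Sigma=S$, and $Q\equiv 0$ is equivalent to $g$ satisfying pointwise the same relation $\overline{g_{\bar z}}=b(g)g_z$ that $g_S$ does. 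I also record the identities $\overline{g_{\bar z}}=\tfrac{R}{2g_z}Q+b(g)g_z$ and $g_{\bar z}=\tfrac{\overline R}{2\,\overline{g_z}}\,\overline Q+\overline{b(g)}\,\overline{g_z}$.

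The heart of the proof, and the main obstacle, is to show that on every such $\Sigma$ one has $Q_{\bar z}=a\,Q+c\,\overline Q$ with $a,c$ bounded. I would differentiate $Q=P_\Sigma-F(g)g_z^2$ in $\bar z$, using: the Codazzi equation for surfaces of prescribed mean curvature in $X$ to write $(P_\Sigma)_{\bar z}$ as a first--order expression (through $H_{\bar z}$ and ambient--curvature terms, which are functions of $g$ by homogeneity of $X$); the Gauss map equation to eliminate $g_{z\bar z}$; and the two identities above to trade every $\overline{g_{\bar z}}$ for a multiple of $Q$ plus a function of $g,\bar g$ times $g_z$, and every $g_{\bar z}$ for a multiple of $\overline Q$ plus a function of $g,\bar g$ times $\overline{g_z}$. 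This yields $Q_{\bar z}=aQ+c\overline Q+I$, with $a,c$ bounded on the compact surface (here one uses $R\neq 0$ and $|g_z|$ bounded away from $0$) and $I$ a universal expression in $g,\bar g,g_z,\overline{g_z}$ built from $\cH,R$ and their first derivatives. Since $I$ must transform as a section of $K^2\overline K$, it is forced to have the form $I=I_0(g,\bar g)\,g_z^2\,\overline{g_z}$; evaluating the identity on $S$, where $Q\equiv\overline Q\equiv Q_{\bar z}\equiv 0$ and $(g_S)_z\neq 0$, gives $I_0(g_S,\overline{g_S})\equiv 0$, and since $g_S$ is onto $\bar{\C}$ this forces $I_0\equiv 0$, hence $I\equiv 0$. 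Organizing this differentiation and bookkeeping the Codazzi curvature terms and the cancellations that produce $I$ in this homogeneous form is the delicate point; it is the exact analogue of the constant mean curvature computation in \cite{DM,mmpr0}.

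Finally, by the similarity principle for $Q_{\bar z}=aQ+c\overline Q$, locally $Q=e^{\varphi}h$ with $\varphi$ continuous and $h$ holomorphic; hence $Q\,dz^2$ either vanishes identically or has only isolated zeros, each of strictly negative index. On the sphere $\Sigma$, $\chi(\Sigma)=2$, while the total index of the line field determined by a nonzero smooth quadratic differential equals $\chi(\Sigma)$; thus a differential with no zeros, or with only negative--index zeros, is impossible, forcing $Q\equiv 0$, i.e. $\overline{g_{\bar z}}=b(g)g_z$, equivalently $g_{\bar z}=\overline{b(g)}\,\overline{g_z}$, on $\Sigma$. Now set $F_0:=g_S^{-1}\circ g\colon\Sigma\to S$. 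The chain rule gives $(F_0)_{\bar z}=\big[(g_S^{-1})_q(g)\,\overline{b(g)}+(g_S^{-1})_{\bar q}(g)\big]\overline{g_z}$, and applying the same computation to the identity map $g_S^{-1}\circ g_S$ of $S$ shows that the bracketed expression vanishes identically on $\bar{\C}$; hence $F_0$ is holomorphic. Since $g=g_S\circ F_0$ with $g_z\neq 0$ and the conformal derivative of $g_S$ nowhere zero (Remark (1) after Theorem \ref{rep}), $F_0$ is unramified, hence a biholomorphism of spheres (a Möbius transformation after identification with $\bar{\C}$). Then $\psi_S\circ F_0\colon\Sigma\to X$ is a conformal immersion with the same Gauss map $g$ and mean curvature $\widehat{\cH}(g)$ as $\psi_\Sigma$, so by the uniqueness statement in Theorem \ref{rep} (Remark (4) after it) $\psi_\Sigma=L\circ\psi_S\circ F_0$ for some left translation $L$ of $X$; therefore $\Sigma=L(S)$, and in particular $\Sigma$ and $S$ are congruent in $X$.
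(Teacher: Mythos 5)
Your proposal is correct and follows essentially the same route as the paper: your differential $Q=P_\Sigma-F(g)\,g_z^2=\tfrac{2g_z}{R}\bigl(\overline{g_{\bar z}}-b(g)g_z\bigr)$ is (up to the factor $2$) exactly the paper's $Q_{\cH}=\cL(g)g_z^2+\cM(g)g_z\bar g_z$, and your scheme --- derive $Q_{\bar z}=aQ+c\overline{Q}$ by differentiating and killing the residual term through evaluation on $S$ together with surjectivity of its Gauss map, then conclude by Poincar\'e--Hopf on the sphere and recover the left translation from holomorphicity of $g_S^{-1}\circ g$ and the uniqueness part of Theorem \ref{rep} --- is precisely the content of the paper's Claims 1--4 and its proof of Claim 2. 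The differences are only organizational (you route the computation through the Codazzi equation and a weight-homogeneity argument where the paper derives the explicit PDEs \eqref{eq4M}--\eqref{eq4L} for $\cM,\cL$, and you globalize the last step via a M\"obius transformation instead of the paper's local-plus-continuation argument), so they do not change the substance.
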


\begin{remark}
Theorem \ref{main} clearly implies Theorem \ref{mainr3}. Note that under the hypotheses of Theorem \ref{mainr3}, the prescribed function $\cH\in C^1(\S^2)$ needs to be positive at every point.
\end{remark}

To begin the proof of Theorem \ref{main}, assume that there exists an immersed sphere $S$ in $X$
with prescribed mean curvature $\cH$, and whose Gauss map
$\nu:S\flecha \S^2$ is a diffeomorphism. Let $G=\pi\circ\nu:S\flecha
\bar{\C}$ where $\pi$ denotes the stereographic projection from the
south pole, i.e. if the unit normal $N$ of $\psi$ is expressed as $N=\sum_i \nu_i E_i$
with respect to the canonical frame $\{E_1,E_2,E_3\}$ of $X$, then
$$G=\frac{\nu_1 +i \nu_2}{1+\nu_3}:S\flecha \bar{\C}.$$ Then $G$ is
an orientation preserving diffeomorphism, and so
$|G_z|^2-|G_{\bar{z}}|^2>0$. In particular $G_z\neq 0$ at every
point. By \eqref{eqlanda}, the potential function $R(H,q)$ of $X$ does
not vanish at the points of the form $(H_S(p),G(p))$, where $p\in S$
and $H_S$ is the mean curvature function of $S$. So, as $G$ is a
diffeomorphism, $R(\cH(q),q)\neq 0$ for every $q\in \bar{\C}$.

Theorem \ref{main} follows easily from the following result of
independent interest:

\begin{theorem}\label{hopf}
In the conditions of Theorem \ref{main}, there exists a complex quadratic
differential $Q_{\cH} \, dz^2$ defined for any immersed surface
$\psi:\Sigma\flecha X$ with prescribed mean curvature $\cH$, so
that:

\begin{enumerate}
\item
$Q_{\cH}\, dz^2$ vanishes identically on $S$.
 \item
If $Q_{\cH}\, dz^2$ vanishes identically on $\psi:\Sigma\flecha X$,
then $\psi(\Sigma)$ is a left translation in $X$ of an open subset
of $S$.
 \item
If $Q_{\cH}\, dz^2$ does not vanish identically on
$\psi:\Sigma\flecha X$, then the zeros of $Q_{\cH} \, dz^2$ are all
isolated and of negative index on $\Sigma$.
\end{enumerate}
\end{theorem}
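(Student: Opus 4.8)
\medskip
\noindent\textit{Proof plan.}
The idea is to obtain $Q_{\cH}\,dz^{2}$ by modifying the Hopf differential $P\,dz^{2}$ of \eqref{calho2} by a correction built only out of the Gauss map, tuned so that $Q_{\cH}$ vanishes on $S$; one then checks that the vanishing of $Q_{\cH}$ characterizes left translations of (pieces of) $S$, while otherwise $Q_{\cH}$ satisfies a Carleman type differential inequality. What makes all three assertions work is a single feature: the hypothesis that $G:=\pi\circ\nu$ is a diffeomorphism of $S$ onto $\bar{\C}$ at once forces $R(\cH(q),q)\neq0$ on $\bar{\C}$, makes the correction well defined on $\bar{\C}$ and $Q_\cH$ vanish on $S$, and cancels the obstruction to $Q_\cH$ being pseudoanalytic.

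First I would set up the reduction and the differential. Write $\mathcal R(q):=R(\cH(q),q)$, which is nowhere zero on $\bar{\C}$ as observed before the statement. For \emph{any} immersed surface $\psi:\Sigma\flecha X$ of prescribed mean curvature $\cH$, Remark (1) of Subsection \ref{subrem} gives $g_z\neq0$ everywhere, and substituting $H=\cH(g)$ in \eqref{gaussmap} (so $H_{\bar z}=\cH_q(g)\,g_{\bar z}+\overline{\cH_q(g)}\,\overline{g_z}$, as $\cH$ is real) the equation becomes a self-contained quasilinear elliptic system for $g$,
\begin{equation}\label{eq:planGM}
g_{z\bar z}=a(g)\,g_zg_{\bar z}+b(g)\,|g_z|^{2},
\end{equation}
with $a=\mathcal R_q/\mathcal R$ and $b$ continuous functions of $g$ determined by $\cH$ and $X$ (the $H_{\bar z}$ term of \eqref{gaussmap} being absorbed into the first two terms). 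Since $G$ is a diffeomorphism, the function $\overline{G_{\bar z}}/G_z$ on $S$ --- which is invariant under holomorphic changes of conformal parameter, hence a genuine function on $S$, of modulus $<1$ --- transports through $G$ to a function $\sigma\in C^{1}(\bar{\C})$, $|\sigma|<1$. Put $\Lambda(q):=2\sigma(q)/\mathcal R(q)+U(q,\bar q)$, with $U$ as in \eqref{calho2}, and define $Q_{\cH}\,dz^{2}:=(P-\Lambda(g)\,g_z^{2})\,dz^{2}$ (the point $g=\infty$ treated as in the proof of Theorem \ref{rep}). By \eqref{calho2} this collapses to
\begin{equation}\label{eq:planQ}
Q_{\cH}=\frac{2\,g_z}{\mathcal R(g)}\,\Omega,\qquad\Omega:=\overline{g_{\bar z}}-\sigma(g)\,g_z .
\end{equation}
On $S$ one has $g=G$ and $\sigma(G)\,G_z=\overline{G_{\bar z}}$ by the very definition of $\sigma$, hence $\Omega\equiv0$ and $Q_{\cH}\equiv0$ on $S$: this is (1).

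Since $g_z\neq0$ and $\mathcal R(g)\neq0$, \eqref{eq:planQ} shows $Q_{\cH}\equiv0$ on $\psi$ if and only if $\Omega\equiv0$. Setting $\Phi:=G^{-1}\circ g:\Sigma\flecha S$, the chain rule gives $\Phi_{\bar z}=(G^{-1})_q(g)\,\bigl(g_{\bar z}-\overline{\sigma(g)}\,\overline{g_z}\bigr)=(G^{-1})_q(g)\,\overline{\Omega}$, since the complex dilatation of $G^{-1}$ equals $-\overline{\sigma}$; as $(G^{-1})_q\neq0$ ($G^{-1}$ being an orientation preserving diffeomorphism), $\Omega\equiv0$ is equivalent to $\Phi$ being holomorphic. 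In that case $g_z=(G_q\circ\Phi)\,\Phi_z$ together with $g_z\neq0$ forces $\Phi_z\neq0$, so $\Phi$ is a local biholomorphism; hence $\psi_S\circ\Phi$ is a conformal immersion of $\Sigma$ with Gauss map $g$ and mean curvature $\cH(g)$, and by the uniqueness statement of Theorem \ref{rep} (Remark (4) of Subsection \ref{subrem}) it differs from $\psi$ by a left translation $L$. Then $\psi(\Sigma)=L\bigl(\psi_S(\Phi(\Sigma))\bigr)$ is a left translation of the open subset $\Phi(\Sigma)\subset S$, which is (2) (and (1) is the case $\Phi=\mathrm{id}$).

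For (3) I would prove that $\Omega$ is pseudoanalytic. Differentiating $\Omega$ in $\bar z$ (using $\overline{g_{\bar z}}=\bar g_z$ and $\bar g_{z\bar z}=\overline{g_{z\bar z}}$), substituting \eqref{eq:planGM}, and eliminating $\overline{g_{\bar z}}$, $g_{\bar z}$ in favour of $\Omega$, $\overline{\Omega}$, one obtains an identity
\[
\Omega_{\bar z}=\overline{a(g)}\,\overline{g_z}\,\Omega-\bigl(\sigma_q(g)+\sigma(g)a(g)\bigr)g_z\,\overline{\Omega}+\gamma(g)\,|g_z|^{2},
\]
where $\gamma\in C^{0}(\bar{\C})$ depends only on $a,b,\sigma$ (and their first derivatives), hence only on $\cH,X,S$ --- not on $\psi$. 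The crucial point is that $\gamma\equiv0$: applying the identity to the model $g=G$, for which $\Omega\equiv0$ and $G_z\neq0$, gives $\gamma(G)\,|G_z|^{2}\equiv0$, and since $G$ is onto $\bar{\C}$ we conclude $\gamma\equiv0$. Therefore $\Omega$ solves a linear elliptic system $\Omega_{\bar z}=A\,\Omega+B\,\overline{\Omega}$ with $A,B\in C^{0}(\Sigma)$, so $|\Omega_{\bar z}|\le(|A|+|B|)\,|\Omega|$; by the Carleman similarity principle (as used, e.g., in \cite{AR1,DM,mmpr0}), either $\Omega\equiv0$ --- case (2) --- or the zeros of $\Omega$ are isolated, with $\Omega(z)=(z-z_0)^{n}\phi(z)$, $\phi$ continuous and $\phi(z_0)\neq0$, $n\ge1$, near each zero. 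By \eqref{eq:planQ}, $Q_{\cH}\,dz^{2}$ then has an isolated zero of order $n$, hence of negative index, at each such point. This gives (3). The delicate part of the whole argument is the computation leading to \eqref{eq:planQ} together with $\gamma\equiv0$: it is precisely the choice of $\Lambda$ that makes $Q_\cH$ vanish on $S$ which also kills the inhomogeneous term in the equation for $\Omega$, and verifying this requires expanding the substitutions above with the explicit potential $R$ of Definition \ref{HP}, separately in the unimodular and non-unimodular cases as in the proof of Theorem \ref{rep}.
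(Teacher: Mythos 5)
Your proposal is correct and follows essentially the same route as the paper: your $Q_{\cH}=\frac{2\,g_z}{\mathcal R(g)}\bigl(\bar g_z-\sigma(g)g_z\bigr)$ is (up to the factor $2$) exactly the differential $\cL(g)g_z^2+\cM(g)g_z\bar g_z$ of \eqref{hopfdif} with $\cL=-\sigma\cM$, and your three steps mirror the paper's Claims 1--4 (vanishing on $S$ by construction, holomorphicity of $G^{-1}\circ g$ plus the uniqueness of Theorem \ref{rep} for assertion (2), and a linear equation $\,\Omega_{\bar z}=A\Omega+B\overline{\Omega}\,$ with the inhomogeneous term killed by evaluating on $S$ and using surjectivity of $G$, which is precisely how the paper obtains \eqref{eq4L} before applying the Alencar--do Carmo--Tribuzy type index argument). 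The only cosmetic differences are that you package the computation through $\Omega$ and present $Q_{\cH}$ as a correction of the Hopf differential $P$, and, like the paper, you defer the routine chart checks at $g=\infty$.
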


Indeed, by the Poincaré-Hopf theorem, a complex quadratic differential on
$\bar{\C}$ cannot have only isolated zeros of negative index. Thus
Theorem \ref{main} follows from Theorem \ref{hopf}.

\vspace{0.2cm}

\begin{proof}[Proof of Theorem \ref{hopf}]

Define
 \begin{equation}\label{defM}
\cM(q)= \frac{1}{R(\cH(q),q)}: \bar{\C} \flecha \C,
 \end{equation}
 which by the discussion above takes finite values, and let $\cL(q):\bar{\C}\flecha \C$
be
 \begin{equation}\label{defL}
\cL(q)= -\frac{\bar{G}_z}{G_z} (G^{-1} (q)) \cM(q).
 \end{equation}

We define for any conformally immersed surface
$\psi:\Sigma\flecha X$ with prescribed mean curvature $\cH$ the
complex quadratic differential $Q_{\cH} \, dz^2$ on $\Sigma$ given
by
 \begin{equation}\label{hopfdif}
Q_{\cH} = \cL (g) g_z^2 + \cM(g) g_z \bar{g}_z.
 \end{equation}
Here $z$ is an arbitrary conformal parameter of $\Sigma$, and
$g:\Sigma\flecha \bar{\C}$ is the Gauss map of $\psi$.

We divide the proof of Theorem \ref{hopf} into several claims.

\vspace{0.2cm}

{\bf Claim 1:} \emph{$Q_{\cH}\, dz^2$ is a well defined complex
quadratic differential on any surface $\psi:\Sigma\flecha X$ with
prescribed mean curvature $\cH$.}
\begin{proof}[Proof of Claim~1]
The invariance of $Q_{\cH}\, dz^2$ under conformal changes of
coordinates is clear. Besides, it follows from the discussion above
that $\cL(g)$ and $\cM(g)$ take finite values at points $p\in
\Sigma$ where $g(p)\neq \8$. So, we only need to check that
$Q_{\cH}$ can be defined even when $g(p)=\8$. To do so first
observe that, since the potential function $R(H,q)$ satisfies that
$R(H,q)/|q|^4$ has a smooth extension to $q= \8$ for every $H\in
\R$, then $|q|^4 \cM(q)$ also has a smooth extension to $q= \8$.
Also, from the definition of $\cL$ in \eqref{defL} and the fact that
$G_z\neq 0$ (see the footnote in the statement of Theorem \ref{rep}
for the meaning of this condition at points where $G=\8$), we can
deduce that $|q|^4 \cL(q)$ also has a finite limit as $q\to \8$.
From here, one can easily show that $Q_{\cH}\, dz^2$ is well defined
even at points $p\in \Sigma$ where the Gauss map $g$ of $\psi$
satisfies $g(p)=\8$.
\end{proof}

{\bf Claim 2:} \emph{$Q_{\cH}\, dz^2$ vanishes identically on $S$.
And conversely, if $Q_{\cH}\, dz^2$ vanishes identically for some
surface $\psi:\Sigma\flecha X$ of prescribed mean curvature $\cH$,
then $\psi(\Sigma)$ is a left translation of an open subset of $S$.}
\begin{proof}[Proof of Claim~2]
The first assertion is trivial by the very definition of $Q_{\cH}$.
The converse statement can be proved using the idea in \cite[Lemma
4.6]{DM}. We include a proof here for the sake of completeness.

Let $\psi:\Sigma\flecha X$ be a conformal immersion with
$Q_{\cH}\equiv 0$, and let $g:\Sigma\flecha \bar{\C}$ be its Gauss
map. Note that $g_z\neq 0$ at every point. Also, as $G:S\equiv
\bar{\C}\flecha \bar{\C}$ is an orientation preserving
diffeomorphism, we have $|G_z|^2 - |G_{\bar{z}}|^2 >0.$ Define $\phi
: G^{-1}\circ g:\Sigma\flecha S\equiv \bar{\C}$. An elementary
computation shows that

\begin{equation}\label{eqfi}
\phi_{\bar{z}} = \frac{1}{|G_z|^2-|G_{\bar{z}}|^2} \left(
\overline{G_z} g_{\bar{z}} - G_{\bar{z}} \overline{g_z}\right),
\end{equation}
where $G_z,G_{\bar{z}}$ are evaluated at $\phi(z)$ for every $z\in
\Sigma$. Since by hypothesis $$\cL(g) g_z + \cM(g) \bar{g}_z =0,$$
we can rewrite \eqref{eqfi} as
 \begin{equation}\label{eqfi2}
\phi_{\bar{z}} = \frac{1}{|G_z|^2-|G_{\bar{z}}|^2}
\left(-\frac{\overline{\cL}(g)}{\overline{\cM}(g)} \overline{G_z} -
G_{\bar{z}} \right) \overline{g_z},
 \end{equation}
where again $G_z,G_{\bar{z}}$ are evaluated at $\phi(z)$. Observe
now that $g=G\circ \phi$ and that $\cL(G) G_z + \cM(G)
\overline{G}_z =0$ since $Q_{\cH}\equiv 0$ on $S$. This implies by
\eqref{eqfi2} that $\phi_{\bar{z}}=0$, i.e. $\phi$ is holomorphic.

Therefore, up to a local conformal change of coordinates in $\Sigma$
we can assume that $G=g$ on a neighborhood $U\subset\bar{\C}$ of an
arbitrary point $z_0$ of $\Sigma$. In particular the mean curvatures
of $\psi$ and $S$ coincide on $U$ since $\psi$ and $S$ have the same
prescribed mean curvature function $\cH$ and the same Gauss map. So,
by the uniqueness in Theorem \ref{rep} (see Remark 4 in Subsection
\ref{subrem}), we have that $\psi(U)$ differs from an open set of
$S$ by a left translation in $X$. A simple continuation argument
shows that the same is true for $\psi(\Sigma)$.
\end{proof}

{\bf Claim 3:} The maps $\cL,\cM$ defined in \eqref{defL},
\eqref{defM} satisfy the following PDEs on $\bar{\C}$:

\begin{equation}\label{eq4M}
 \cM_{\bar{q}} + (\overline{\cA} +
\cB) \cM = \cH_{\bar{q}} (1+|q|^2)^2 |\cM|^2.
 \end{equation}

\begin{equation}\label{eq4L}
  \left(\cL_q + 2\cA \cL\right) \overline{\cL} =\left(\cL_{\bar{q}} + 2 \cB \cL + \overline{\cB} \cM - \cL \cH_{\bar{q}} \overline{\cM} (1+|q|^2)^2\right) \overline{\cM},
\end{equation}

Here $\cA,\cB:\bar{\C}\flecha \bar{\C}$ are defined as
\begin{equation}\label{coefab} \cA (q) = -\frac{\cM_q}{\cM} (q),
\hspace{1cm} \cB(q)= \left( \frac{ \overline{\cM_q}}{
\overline{\cM}} - \frac{\cM_{\bar{q}}}{\cM}  \right) (q) +
(1+|q|^2)^2\cH_{\bar{q}}(q)\overline{\cM(q)}.
 \end{equation}

\begin{proof}[Proof of Claim~3]
To start, let us consider an arbitrary conformally immersed surface
$\psi:\Sigma\flecha X$ with prescribed mean curvature $\cH$, and let
$g:\Sigma\flecha \bar{\C}$ denote its Gauss map. As $R(\cH(q),q)\neq
0$ for every $q\in \bar{\C}$, we have by \eqref{eqlanda}  that
$g_z\neq 0$ on $\Sigma$. Let $H$ be the mean curvature of $\psi$,
given by $H=\cH\circ g$. Differentiating,
$$H_{\bar{z}} = \cH_q (g) g_{\bar{z}} + \cH_{\bar{q}} (g)\overline{g_z}.$$ From here, a computation shows that the Gauss map
equation \eqref{gaussmap} for $g$ can be written in this situation
as
 \begin{equation}\label{gausspre}
g_{z\bar{z}} = \cA (g) g_z g_{\bar{z}} + \cB (g) |g_z|^2,
 \end{equation} where $\cA
(q),\cB (q): \bar{\C}\flecha \bar{\C}$ are given by \eqref{coefab}.


A direct computation shows that $\cM$ satisfies \eqref{eq4M}. In
order to check that $\cL$ satisfies \eqref{eq4L} we first observe
that
\begin{equation}\label{qcero}
\cL(G) G_z + \cM(G) \bar{G}_z =0
 \end{equation}
 on $S\equiv \bar{\C}$, since $Q_{\cH}$ vanishes identically on $S$. Differentiating \eqref{qcero} with respect to $\bar{z}$ and using \eqref{eq4M} together with the fact that $G$ satisfies \eqref{gausspre} (since $S$ has prescribed mean curvature $\cH$), we obtain
  $$\def\arraystretch{1.5}\begin{array}{lll} 0 & = & (\cL_q(G) G_{\bar{z}} + \cL_{\bar{q}}(G) \overline{G_z}) G_z +
  \cL(G) G_{z\bar{z}} \\ & & + (\cM_q(G) G_{\bar{z}} + \cM_{\bar{q}}(G) \overline{G_z}) \overline{G}_z + \cM(G) \overline{G}_{z\bar{z}}  \\ & = &
  \{\cL_q + \cA \cL\} G_zG_{\bar{z}} + \{\cL_{\bar{q}} + \cB \cL + \overline{\cB} \cM\} |G_z|^2 \\ & & - \{\cA \cM\} |G_{\bar{z}}|^2 +\{-\cB \cM + \cH_{\bar{q}} |\cM|^2 (1+|q|^2)^2\} \overline{G}_z \overline{G_z},\end{array}$$ where the functions between brackets are all evaluated at $q=G(z)$. Using now the relation \eqref{qcero} in this equation, we arrive at
$$ \{\cL_q + 2\cA \cL\} G_z G_{\bar{z}} +\{\cL_{\bar{q}} + 2 \cB \cL + \overline{\cB} \cM - \cL \cH_{\bar{q}} \overline{\cM} (1+|q|^2)^2\} G_z \overline{G_z} =0.
$$
If we use now the conjugate of \eqref{qcero} and the fact that
$G_z\neq 0$, the previous equation can be reduced to
   \begin{equation}\label{eqLQ}
  \left(\cL_q + 2\cA \cL\right) \overline{\cL} =\left(\cL_{\bar{q}} + 2 \cB \cL + \overline{\cB} \cM - \cL \cH_{\bar{q}} \overline{\cM} (1+|q|^2)^2\right) \overline{\cM},   \end{equation}
  with all functions evaluated at $q=G(z)$. Since $G$ is a diffeomorphism, we deduce that \eqref{eq4L} holds.
 \end{proof}

{\bf Claim 4:} \emph{If $Q_{\cH}\, dz^2$ does not vanish identically
on a surface $\psi:\Sigma\flecha X$ with prescribed mean curvature
$\cH$, then it only has isolated zeros of negative index.}
\begin{proof}[Proof of Claim~4] Let $\psi:\Sigma\flecha X$ be a conformal immersion with prescribed mean curvature $\cH$, and let $g:\Sigma\flecha \bar{\C}$ denote its Gauss map. Recall that $g$ satisfies the PDE \eqref{gausspre}, and that $R(\cH(q),q)\neq 0$ for every $q\in \bar{\C}$. So, in particular, $g_z\neq 0$ at all points of $\Sigma$.

Let $Q_{\cH}\, dz^2$ denote the complex quadratic differential
defined in \eqref{hopfdif}. By differentiating $Q_{\cH}$ with
respect to $\bar{z}$ and using \eqref{gausspre}, we arrive at
\begin{equation}\def\arraystretch{1.5}\begin{array}{lll}\label{eqinQ1}
(Q_{\cH})_{\bar{z}} & =&  \{ \cL_q + 2 \cA \cL\} g_{\bar{z}} g_z^2 +
\{\cL_{\bar{q}} + 2\cB \cL + \overline{\cB}\cM\} g_z |g_z|^2 \\ & &
+ \{\cM_q +\cA\cM\} g_z |g_{\bar{z}}|^2 + \{\cM_{\bar{q}} +
(\overline{\cA} + B) \cM\} \overline{g}_z |g_z|^2,
\end{array}
\end{equation}
where the quantities in brackets are evaluated at $g(z)$ for every
$z\in \Sigma$. As $\cM_q + \cA \cM=0$ by definition of $\cA$, and
$\cM$ satisfies \eqref{eq4M}, we obtain from \eqref{eqinQ1}

\begin{equation}\def\arraystretch{1.7}\begin{array}{lll}\label{eqinQ2}
(Q_{\cH})_{\bar{z}} & =&  \{ \cL_q + 2 \cA \cL\} g_{\bar{z}} g_z^2 + \{\cL_{\bar{q}} + 2\cB \cL + \overline{\cB}\cM\} g_z |g_z|^2  +  \{\cH_{\bar{q}} (1+|q|^2)^2|\cM|^2\} \overline{g}_z |g_z|^2 \\ & & + \{\cL \cH_{\bar{q}} \overline{\cM} (1+|q|^2)^2\} g_z |g_z|^2 - \{\cL \cH_{\bar{q}} \overline{\cM} (1+|q|^2)^2\} g_z |g_z|^2 \\
& = & g_z^2 \left( \{ \cL_q + 2 \cA \cL\} g_{\bar{z}} +
\{\cL_{\bar{q}} + 2\cB \cL + \overline{\cB}\cM- \cL \cH_{\bar{q}}
\overline{\cM} (1+|q|^2)^2\} \overline{g_z}\right) \\ & & + |g_z|^2
(1+|g|^2)^2 \left(\{H_{\bar{q}}|\cM|^2\} \bar{g}_z +
\{\cL\cH_{\bar{q}} \overline{\cM} \} g_z\right)
\end{array}
\end{equation}
where again the quantities in brackets are evaluated at $g(z)$.
Using finally that $\cL$ satisfies \eqref{eq4L} and the definition
of $Q_{\cH}$ we obtain from \eqref{eqinQ2}

\begin{equation}\label{eqinQ3}
(Q_{\cH})_{\bar{z}} = \alfa \, Q_{\cH} + \beta \,
\overline{Q_{\cH}},
\end{equation}
where $\alfa,\beta:\Sigma\flecha \overline{\C}$ are given by
$$\alfa= \cH_{\bar{q}} (g) \overline{\cM}(g) (1+|g|^2)^2 \overline{g_z}, \hspace{0.5cm} \beta= \left(\frac{\cL_q + 2\cA \cL}{\overline{\cM}}\right)(g) \frac{g_z^2}{\overline{g_z}}.$$ Since $g_z\neq 0$ on $\Sigma$, it is clear that $\alfa(z_0),\beta(z_0)$ take values in $\C$ (i.e. they are finite) whenever $g(z_0)\neq \8$. So, from \eqref{eqinQ3} and $Q_{\cH}\not\equiv 0$ we have that
 \begin{equation}\label{eqinQ4}
\frac{|(Q_{\cH})_{\bar{z}}|}{|Q_{\cH}|} \hspace{0.4cm} \text{is
locally bounded}
\end{equation}
around every $z_0\in \Sigma$ with $g(z_0)\neq \8$.

When $g(z_0)=\8$ one can easily show from the above formulas that
\eqref{eqinQ4} also holds, by considering  the map $\xi=1/g$ around
$z_0$. Thus, \eqref{eqinQ4} holds at all points. This condition is
well known to imply that $Q_{\cH}$ only has isolated zeros of
negative index (see \cite{ADT,Jo} for instance).

\end{proof}

Note that Claims 1, 2 and 4 prove Theorem \ref{hopf}.

\end{proof}

Let us point out here that the proof of Theorem \ref{hopf} above also holds without the assumption that the surface $S$ is compact. Specifically, we have:

\begin{corollary}
Let $S$ be an immersed surface with prescribed mean curvature $\cH\in C^1(\S^2)$ in a metric Lie group $X$, and assume that its Gauss map $G:S\flecha \cU:=G(S)\subset \S^2$ is an orientation preserving diffeomorphism onto its image.

Then there exists a complex quadratic differential $Q_{\cH} \, dz^2$ defined for any immersed surface $\psi:\Sigma\flecha X$ with prescribed mean curvature $\cH$ and Gauss map image contained in $\cU$, so that conditions (1), (2) and (3) in Theorem \ref{hopf} hold.
\end{corollary}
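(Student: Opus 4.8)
The proof of Theorem~\ref{hopf} given above is entirely local: compactness of $S$ is used nowhere in Claims 1--4, and enters only afterwards, through the Poincar\'e--Hopf theorem, in the passage from Theorem~\ref{hopf} to Theorem~\ref{main}. The plan is therefore to repeat that argument essentially verbatim, the only change being to keep careful track of the domains on which the auxiliary functions $\cL$ and $\cM$ are defined.

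Concretely, I would write $\pi$ for stereographic projection from the south pole, set $V:=\pi(\cU)\subset\bar{\C}$, and let $\hat G:=\pi\circ G\colon S\flecha V$; this $\hat G$ plays the role of the map denoted $G$ in the proof of Theorem~\ref{hopf}. Since $G$ is an orientation-preserving diffeomorphism onto $\cU$, it is in particular an open map, so $\cU$ (and hence $V$) is open, and $\hat G$ is an orientation-preserving diffeomorphism onto $V$; in particular $|\hat G_z|^2-|\hat G_{\bar z}|^2>0$ and $\hat G_z\neq0$ at every point (in the footnote sense of Theorem~\ref{rep} at points where $\hat G=\8$). Applying the metric formula \eqref{eqlanda} to $S$ gives $R(H_S(p),\hat G(p))\neq0$ for every $p\in S$, and since $\hat G$ is onto $V$ this yields
\[
R(\cH(q),q)\neq0\qquad\text{for every }q\in V .
\]
Consequently $\cM(q)=1/R(\cH(q),q)$ and $\cL(q)=-(\bar{\hat G}_z/\hat G_z)(\hat G^{-1}(q))\,\cM(q)$, defined exactly as in \eqref{defM}--\eqref{defL}, are smooth on the open set $V$, and $|q|^4\cM(q)$, $|q|^4\cL(q)$ admit a smooth extension (resp.\ a finite limit) at $q=\8$ whenever $\8\in V$, by the reasoning of Claim~1.

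Next I would observe that for any immersed $\psi\colon\Sigma\flecha X$ with prescribed mean curvature $\cH$ whose Gauss map $g$ has image contained in $\cU$, the map $g$ takes values in $V$, so $Q_{\cH}:=\cL(g)g_z^2+\cM(g)g_z\bar g_z$ is a well-defined complex quadratic differential on $\Sigma$; this is Claim~1 in the present setting. For Claim~2 one forms $\phi:=\hat G^{-1}\circ g\colon\Sigma\flecha S$, which makes sense precisely because $g(\Sigma)\subset\cU$ and $G$ is injective, and the computation in \eqref{eqfi}--\eqref{eqfi2} showing that $Q_{\cH}\equiv0$ forces $\phi$ holomorphic uses only $|\hat G_z|^2-|\hat G_{\bar z}|^2>0$ and the uniqueness part of Theorem~\ref{rep}; hence $\psi(\Sigma)$ is a left translation of an open subset of $S$. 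Claims~3 and~4 then transcribe word for word, the PDEs \eqref{eq4M}, \eqref{eq4L} and the identity \eqref{eqinQ3} now being identities on $V$ rather than on all of $\bar{\C}$, and each step requiring only $R(\cH(q),q)\neq0$ on $V$ and $g_z\neq0$ on $\Sigma$, both of which hold. This establishes properties (1), (2), (3) of Theorem~\ref{hopf}.

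I expect the only genuine (and still mild) obstacle to be the bookkeeping at the north pole, i.e.\ at points where $g=\8$, which occur only if $\8\in V$: there one must verify, exactly as in Claim~1, that $\cL$, $\cM$ and therefore $Q_{\cH}\,dz^2$ remain well behaved, using that $R(H,q)/|q|^4$ always extends smoothly to $q=\8$ and that $\hat G_z\neq0$ at such points in the footnote sense. Finally, in contrast with Theorem~\ref{main}, no uniqueness statement follows from this corollary, because the Poincar\'e--Hopf argument needs $\Sigma$ to be a closed surface; accordingly the statement asserts only the existence of $Q_{\cH}\,dz^2$ with properties (1)--(3).
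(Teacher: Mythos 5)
Your proposal is correct and coincides with the paper's own justification: the paper proves this corollary simply by remarking that the proof of Theorem \ref{hopf} is local and never uses compactness of $S$, exactly the observation you make, with the same bookkeeping that $\cL$ and $\cM$ need only be defined on the (open) Gauss map image and that the Poincar\'e--Hopf step is the sole place where compactness enters. No changes needed.
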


\section{Final remarks}\label{sec5}

\subsection{Necessity of strict convexity}

Theorem \ref{mainr3} is not true in general if we do not assume that there exists a strictly convex sphere with prescribed mean curvature $\cH\in C^1(\S^2)$. In fact, the condition of $S$ being \emph{strictly} convex (i.e. of positive curvature at every point) cannot be weakened to $S$ being just convex (i.e. of non-negative curvature), as the following example shows.

\begin{example}
Let $C=\{(x,y,z)\in \R^3: x^2+y^2= 1, -1\leq z \leq 1\}$, and let $\mathcal{G}$ be a smooth rotational convex graph $z=z(x,y)$ on $\overline{D}=\{(x,y): x^2+y^2\leq 1\}$ with $z(\parc D) \equiv 1$, so that $S=C\cup \mathcal{G} \cup (-\mathcal{G})$ is a convex (but not strictly convex) sphere in $\R^3$.

Now let $C'=\{(x,y,z)\in \R^3 :x^2 +y^2 =1, -2\leq z \leq 2\}$, let $\mathcal{G}'$ be the vertical translation of $\mathcal{G}$ so that its horizontal boundary is contained in the plane $z=2$, and define $S'= C'\cup \mathcal{G}' \cup (-\mathcal{G}')$, which is again a (not strictly) convex sphere in $\R^3$.

It is then clear that there is a diffeomorphism $\phi:S\flecha S'$ such that, for every $p\in S$:
 \begin{enumerate}
 \item
The Gauss map of $S$ at $p$ agrees with the Gauss map of $S'$ at $\phi(p)$.
 \item
The principal curvatures of $S$ at $p$ agree with the principal curvatures of $S'$ at $\phi(p)$.
 \end{enumerate}
 In particular, $S$ and $S'$ are two convex spheres in $\R^3$ with the same prescribed mean curvature but which do not coincide up to translation in $\R^3$.
\end{example}

\subsection{Higher order contact with spheres}

Let $S,S^*$  be two immersed surfaces of prescribed mean curvature $\cH\in C^1(\S^2)$ in a metric Lie group $X$, and assume that they have a contact point of order $k\geq 1$ at $p\in S\cap S^*$. We assume that the potential function $R$ satisfies $R(\cH(q_0),q_0)\neq 0$ where $q_0\in \bar{\C}$ denotes the common Gauss map image of both $S$, $S^*$ at $p$.  By reparametrizing both surfaces in a suitable way we may view $S,S^*$ around $p$ as two conformal immersion $\psi,\psi^*:\D\flecha X$ with $\psi(0)=\psi^*(0)=p$, whose Gauss maps $g,g^*$ satisfy $g(0)=g^*(0)=: q_0\in \bar{\C}$, and such that $g_z(0)=(g^*)_z(0)=1$. Note that these conditions imply by \eqref{eqlanda} that the conformal factors $\landa,\landa^*$ verify $\landa(0)=\landa^*(0)$.

Also, note that the mean curvatures of $\psi$ and $\psi^*$ also coincide at $0$. Thus, $S,S^*$ have a contact point of order $k\geq 2$ at $p$ if and only if their respective Hopf differentials $Pdz^2$, $P^* dz^2$ satisfy $P(0)=P^*(0)$. It follows from the expressions of $\landa$ and $P$ in \eqref{eqlanda} and \eqref{calho2} that, in our conditions, $P(0)=P^*(0)$ is equivalent to $\bar{g}_z(0)=(\overline{g^*})_z(0)$.

Suppose now that $S$ is a compact surface of prescribed mean curvature $\cH$ whose Gauss map is a diffeomorphism into $\bar{\C}$. Note that in this case the condition $R(\cH(q_0),q_0)\neq 0$ holds automatically for every $q_0\in \bar{\C}$, see Section 4. Let $Q_{\cH}\, dz^2$ denote the complex quadratic differential associated to $S$, given by \eqref{hopfdif}; note that $Q_{\cH}\, dz^2$ is defined for any conformally immersed surface in $X$ with prescribed mean curvature $\cH$.

Then, using the previous discussion together with the fact that $Q_{\cH}\equiv 0$ on $S$, it is easy to check that a surface $S^*$ of prescribed mean curvature $\cH$ in $X$ has a point $p\in S^*$ with $Q_{\cH}(p)=0$ if and only if $S^*$  has a contact of order $k\geq 2$ at $p$ with a left translation of the sphere $S$.

In addition, as $Q_{\cH} \, dz^2$ only has isolated zeros of negative index on $S^*$ by Theorem \ref{hopf}, the Poincaré-Hopf theorem shows that if $S^*$ is compact and of genus $g\geq 1$, then the number of zeros of $Q_{\cH}\, dz^2$ on $S^*$ (counted with multiplicities) is finite and equal to $4g-4$.

As a consequence of this discussion and the existence of the Guan-Guan spheres in $\R^3$ for $\cH\in C^2(\S^2)$ with $\cH(x)=\cH(-x)>0$, we have the next corollary:

\begin{corollary}
Let $\cH\in C^2(\S^2)$ satisfy $\cH(x)=\cH(-x)>0$ for every $x\in \S^2$, let $S_{\cH}\subset \R^3$ be the Guan-Guan sphere for $\cH$, and let $\Sigma$ be a compact immersed surface in $\R^3$ of genus $g$ with prescribed mean curvature $\cH$. Then, up to a translation in $\R^3$:
\begin{enumerate}
\item
If $g=0$, then $\Sigma= S_{\cH}$.
 \item
If $g=1$, then $\Sigma$ has no point of contact with $S_{\cH}$ of order greater than one.
 \item
If $g\geq 2$, then $\Sigma$ has at most $4g-4$ points of contact with $S_{\cH}$ of order greater than one.
\end{enumerate}
\end{corollary}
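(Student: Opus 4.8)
The plan is to deduce everything from Theorem \ref{mainr3}, the quadratic differential of Theorem \ref{hopf}, and the contact-order dictionary established just before the statement. First I would record the geometric input from \cite{GG}: the Guan--Guan sphere $S_{\cH}\subset\R^3$ is \emph{strictly} convex, hence by the characterization of the Gauss map in $\R^3$ recalled in Section \ref{sec2} its Gauss map is a diffeomorphism onto $\S^2$. Therefore $S_{\cH}$ meets the hypotheses of Theorems \ref{main} and \ref{hopf} with $X=\R^3$ and $S=S_{\cH}$, so there is a well-defined complex quadratic differential $Q_{\cH}\,dz^2$ (as in \eqref{hopfdif}) on \emph{every} immersed surface of prescribed mean curvature $\cH$ in $\R^3$, satisfying (1)--(3) of Theorem \ref{hopf}; and, as observed in the paragraphs preceding the corollary, a surface of prescribed mean curvature $\cH$ has a point $p$ with $Q_{\cH}(p)=0$ if and only if it has contact of order $\geq 2$ at $p$ with some translate of $S_{\cH}$.

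Item (1) is then immediate: a compact genus-zero surface $\Sigma$ of prescribed mean curvature $\cH$ is, by Theorem \ref{mainr3} applied with $S=S_{\cH}$, a translate of $S_{\cH}$ (equivalently: on $\Sigma\cong\bar\C$ the differential $Q_{\cH}\,dz^2$ cannot have isolated zeros of negative index, so it vanishes identically, and then Theorem \ref{hopf}(2) together with compactness of $\Sigma$ forces $\Sigma$ to be a translate of $S_{\cH}$). For items (2) and (3), suppose $\Sigma$ has genus $g\geq 1$. I would first exclude the degenerate case $Q_{\cH}\equiv 0$ on $\Sigma$: if it held, Theorem \ref{hopf}(2) would present $\Sigma$ as an immersion onto an open subset of a translate of the sphere $S_{\cH}$; since $\Sigma$ is compact this image is open and closed in that sphere, hence all of it, so the immersion is a finite covering $\Sigma\to\S^2$, forcing $\chi(\Sigma)>0$ and contradicting $g\geq 1$. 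Hence $Q_{\cH}\not\equiv 0$, so by Theorem \ref{hopf}(3) its zeros are isolated of negative index, and by the Poincar\'e--Hopf theorem their number, counted with multiplicities, equals $4g-4$. Using the dictionary above, the points of $\Sigma$ having contact of order $\geq 2$ with some translate of $S_{\cH}$ are exactly the zeros of $Q_{\cH}$: there are none when $g=1$ (hence none with $S_{\cH}$ itself, which is (2)) and at most $4g-4$ distinct ones when $g\geq 2$, which is (3).

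I do not expect a serious obstacle, since the proof only reassembles results established in Sections \ref{sec3}--\ref{sec4}. The only point needing an independent (but short) argument is the exclusion of $Q_{\cH}\equiv 0$ for positive genus, handled by the covering-space observation above. The one thing to double-check carefully is the normalization in the Poincar\'e--Hopf count, namely that for a nonzero $Q_{\cH}\,dz^2$ with isolated zeros of negative index on a closed genus-$g$ surface the zeros number $4g-4$ with multiplicity (equivalently, the associated line field has singularities of negative index summing to $\chi=2-2g$); this is precisely the fact invoked in the remark preceding the corollary, applied here with $S^*=\Sigma$.
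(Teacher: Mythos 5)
Your proposal is correct and follows essentially the same route as the paper: item (1) is Theorem \ref{mainr3} (i.e.\ Corollary \ref{guanuni}) applied to the Guan--Guan sphere, while (2) and (3) come from the contact-order dictionary for zeros of $Q_{\cH}\,dz^2$ together with the Poincar\'e--Hopf count $4g-4$, exactly as in the discussion preceding the corollary. Your explicit exclusion of the degenerate case $Q_{\cH}\equiv 0$ for $g\geq 1$ via the covering argument is a small point the paper leaves implicit, and it is handled correctly.
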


\def\refname{References}

\end{document}